\documentclass[12pt]{amsart}
\usepackage[english]{babel}
\usepackage{amsfonts,amssymb,latexsym,amscd}

\theoremstyle{plain}
\newtheorem{theorem}{Theorem}

\newtheorem{proposition}{Proposition}
\newtheorem{corollary}{Corollary}
\theoremstyle{definition}
\newtheorem{definition}{Definition}
\theoremstyle{remark}
\newtheorem{remark}{Remark}
\newtheorem{example}{Example}

\oddsidemargin=5mm \evensidemargin=5mm \textwidth=15.7cm
\textheight=24cm \topmargin=-10mm \footskip=5mm

\usepackage{hyperref}

\begin{document}

\title[Uniformly Movable Categories]{Uniformly Movable Categories And Uniform Movability Of Topological Spaces}

\author{Pavel S. Gevorgyan}
\address{Moscow Pedagogical State University}
\email{pgev@yandex.ru}

\author{I. Pop}
\address{Faculty of Mathematics, ''Al. I. Cuza'' University, 700505-Ia\c{s}i, Romania}
\email{ioanpop@uaic.ro}

\begin{abstract}
A categorical generalization of the notion of movability from the inverse systems and shape theory was given by the first author who defined the notion of movable category and interpreted by this the movability of topological spaces. In this paper the authors define the notion of uniformly movable category and prove that a topological space is uniformly movable in the sense of the shape theory if and only if its comma category in the homotopy category HTop over the subcategory HPol of polyhedra is a uniformly movable category. This is a weakened version of the categorical notion of uniform movability introduced by the second author.
\end{abstract}

\keywords{shape theory, uniformly movable inverse system (space), uniformly movable category.}

\subjclass{55P55; 54C56}

\maketitle

\section{Introduction}

The notion of movability for metric compacta was introduced by K.
Borsuk \cite{B} as an important shape invariant. The movable spaces are a
generalization of spaces having the shape of ANR's. The property of
movability allows that a series of important results in algebraic
topology (as Whitehead and Hurewicz theorems) remain valid with the
homotopy pro-groups replaced by the corresponding shape groups. 

The term movability comes from a geometric interpretation of the definition in the compact case: if $X$ is a compactum lying in a space $M\in $ AR, one says that $X$ is movable if for every neighborhood $U$ of $X$ in $M$ there exists a neighborhood $V$ of $X$, $V\subset U$, such that for every neighborhood $W$ of $X$, $W\subset U$, there is a homotopy $H :V\times [0,1]\rightarrow U$ such that $H(x,0)=x$ and $H(x,1)\in W$, for every $x\in V$. One shows that the choice of the space $M\in $AR does not matter \cite{B}. After the notion of movability had been expressed in terms of ANR-systems, for arbitrary topological spaces, \cite{MS1}, it became clear that one could define it in arbitrary pro-categories. 

The definition of a movable object in an arbitrary pro-category and the definition of the uniform movability was given by M. Moszy\'{n}ska \cite{MOS}. The uniform movability is important in the study of mono- and epi- morphisms in pro-categories and in the study of the shape of pointed spaces. In the book of Marde\v{s}i\'{c} \& Segal \cite{MS2} all these approaches and applications of various types of movability are discussed.

A categorical generalization of the notion of movability from inverse systems and the shape theory was given by the first author of the present paper who defined the notion of movable category and interpreted by this property the movability of topological spaces \cite{G1}.

Regarding a concept of uniform movability for a category, an approach was
given by the second author in \cite{P}. In that paper a category $\mathcal{K}$ is
called uniformly movable with respect to a subcategory $\mathcal{K}^{\prime }
$ provided that there exists a pair $(F,\varphi )$ with $F:\mathcal{%
K\rightarrow K}$ a covariant functor and $\varphi :$ $F\rightarrow 1_{%
\mathcal{K}}$ a natural transformation such that every morphism $f\in
\mathcal{K(}Y,X)$ in the category $\mathcal{K},$ with $Y\in \mathcal{K}%
^{\prime },$ admits a morphism $G(f)\in \mathcal{K(}F(X),Y)$
satisfying the
relation $f\circ G(f)=\varphi (X)$ and such that the correspondence $%
f\rightarrow G(f)$ is natural in the sense that a commutative diagram
\begin{center}
$
\begin{array}{lll}
Y & \stackrel{f}{\rightarrow } & X \\
\downarrow _{u} &  & \downarrow _{v} \\
Y^{\prime } & \stackrel{f^{\prime }}{\rightarrow } & X^{\prime }
\end{array}
$
\end{center}
in the category $\mathcal{K}$, with $u:Y\rightarrow Y^{\prime }$ a morphism
in $\mathcal{K}^{\prime }$, induces the equality $G(f^{\prime })\circ
F(v)=u\circ G(f).$ In the case $\mathcal{K}^{\prime }=\mathcal{K}$ the
category $\mathcal{K}$ was simply named uniformly movable category. The pair
$(F,\varphi )$ is called a uniform movability pair of $\mathcal{K}$ and the
morphism $G(f)$ a uniform movability factor of $f.$

This definition is good and suggestive from the functorial point of
view but it is too strong if one has in mind movability of
topological spaces. The uniform movability of the comma category of
a space $X$ in HTop over HPol implies the uniform movability of $X$
(\cite{P}, Cor. 1). However, the converse was proved only with two
supplementary conditions for the space $X$ (\cite{P}, Cor. 2). In the
present paper we give an weakened version of this definition so that
the new definition permits a characterization of the uniform
movability of an arbitrary space $X $ via the uniform movability of
the comma category of $X.$ This is in fact the subject of the
present paper.

\section{Uniformly movable categories}

Let $\mathcal{K}$ be an arbitrary category.

\begin{definition}[{\cite{G1},\cite{G2}}] We say that an object $X$ of the category $\mathcal{K}$ is \emph{movable},
if there are an object $M(X)\in \mathcal{K}$ and a morphism $m_{X}:M(X)\rightarrow X$ in $\mathcal{K}$ that satisfy the condition: for any object $Y\in \mathcal{K}$ and any morphism $p:Y\rightarrow X$ in $%
\mathcal{K}$ there exists a morphism $u(p):M(X)\rightarrow Y$
which makes the diagram

\begin{center}
\begin{picture}(70,90)
\put(0,35){$X$} \put(70,0){$Y$} \put(60,70){$M(X)$}
\put(63,65){\vector(-2,-1){48}} \put(26,56){\scriptsize $m_X$}
\put(65,7){\vector(-2,1){51}} \put(35,12){\scriptsize $p$}
\put(74,66){\vector(0,-1){53}} \put(76,35){\scriptsize $u(p)$}
\end{picture}

Diagram 1.
\end{center}
commutative i. e., $p\circ u(p)=m_{X}.$

A category $\mathcal{K}$ is called \emph{movable} if any object of the category $%
\mathcal{K}$ is movable.
\end{definition}

\begin{definition}\label{umov}
We say that an object $X$ of the category $\mathcal{K}$ is \emph{uniformly movable}, if there are an object $M(X)\in \mathcal{K}$ and a morphism $m_{X}:M(X)\rightarrow X$ in $\mathcal{K}$ that satisfy the following conditions:

1. for any object $Y\in \mathcal{K}$ and any morphism $p:Y\rightarrow X$ in $%
\mathcal{K}$ there exists a morphism $u(p):M(X)\rightarrow Y$ in $\mathcal{K}
$ which make Diagram 1 commutative and

2. for all morphisms $p:Y\rightarrow X,q:Z\rightarrow X$ and $r:Z\rightarrow
Y$ in $\mathcal{K}$, such that $p\circ r=q,$ the Diagram 2 is commutative

\begin{center}
\begin{picture}(70,90)
\put(0,0){$Z$} \put(0,70){$Y$} \put(20,35){$X$} \put(70,35){$M(X)$}

\put(3,15){\vector(0,1){48}} \put(-3,35){\scriptsize $r$}
\put(10,70){\vector(1,-2){11}} \put(19,58){\scriptsize $p$}
\put(10,8){\vector(1,2){11}} \put(17,15){\scriptsize $q$}
\put(67,40){\vector(-1,0){35}} \put(42,34){\scriptsize $m_X$}
\put(68,47){\vector(-2,1){55}} \put(40,62){\scriptsize $u(p)$}
\put(68,30){\vector(-2,-1){55}} \put(40,10){\scriptsize $u(q)$}
\end{picture}

Diagram 2.
\end{center}
i. e., $u(p)=r\circ u(q).$

A category $\mathcal{K}$ is called \emph{uniformly movable} if any object of the category $\mathcal{K}$ is uniformly movable.

We will call the morphism $m_{X}$ a \emph{(uniform) movability morphism} of $X$ and the morphism $u(p)$ a \emph{(uniform) movability factor} of $p.$
\end{definition}

It is evident that every uniformly movable object is movable. The following example shows that the converse is not true.

\begin{example}
Let \textbf{Set}$^{\circ }$ be the category of nonempty sets. Then a
singleton is a movable object but not uniformly movable.
\end{example}

Indeed, let $\{*\}$ ba a singleton. Then it is obvious that we can take  as a movability morphism for $\{*\}$ any constant map $M(*)\to \{*\}$. For an arbitrary map $q:Z\to \{*\}$ we can take $u(q):M(*)\to Z$ any map. Thus $\{*\}$ is movable. But if $p:Y\to \{*\}$ is another map we can write $q=p\circ r$, $q=p\circ r'$, for any maps $r,r':Z\to Y$. Let $x_*\in M(*)$ and suppose that $r(u(q)(x_*))\neq r'(u(q)(x_*))$ (suppose that $Y$ has the cardinal at least two). Then the relations $u(p)=r\circ u(q)$ and $u(p)=r'\circ u(q)$ are incompatible. Therefor $\{*\}$ is not uniformly movable.

\begin{remark}
If $\mathcal{K}$ is a uniformly movable category in the sense of \cite{P} (see
also Introduction) then $\mathcal{K}$ is also a uniformly movable category
in the sense of Definition 2.
\end{remark}
Indeed, let $(F,\varphi )$ be a uniform movability pair
of the category $\mathcal{K}$ and for a morphism $f\in \mathcal{K}(X,Y),$ $G(f)$ a uniform movability factor of $f$ (see Introduction or Def. 1 in \cite{P}). Now with the notations from Definition 2, if $X$ is an object in $%
\mathcal{K}$, and $p:Y\rightarrow X$ is a morphism in $\mathcal{K}$, we
take $M(X)=F(X),m_{X}=\varphi (X)$ and $u(p)=G(p).$ Now the relation $p\circ
G(p)=\varphi (X)$ is translated as $p\circ u(p)=m_{X}$ which is the
condition 1. from Definition 2. Then for some morphisms $p:Y\rightarrow
X,q:Z\rightarrow X$ and $r:Z\rightarrow Y$ such that $p\circ r=q$, we can
consider the commutative diagram
$$
\begin{array}{lll}
Y & \stackrel{p}{\rightarrow } & X \\
\uparrow _r &  & \uparrow _{1_{X}} \\
Z & \stackrel{q}{\rightarrow } & X
\end{array}
$$
such that we have the equality $r\circ G(p)=G(q)$ which is translated as $%
r\circ u(p)=u(q),$ which is the condition 2 from Definition
\ref{umov}.

This remark permits to take over a series of examples from \cite{P}.

\begin{proposition}
Every category $\mathcal{K}$ with null morphisms is a uniformly
movable category.
\end{proposition}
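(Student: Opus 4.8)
The plan is to build the entire uniform movability structure out of the null morphisms, so that the statement reduces to the defining (absorbing) property of such morphisms. Recall that in a category $\mathcal{K}$ with null morphisms every hom-set $\mathcal{K}(A,B)$ contains a distinguished morphism $0_{A,B}$, and these are absorbing under composition: $g\circ 0_{A,B}=0_{A,C}$ for every $g\colon B\to C$, and $0_{B,C}\circ h=0_{A,C}$ for every $h\colon A\to B$. For an arbitrary object $X\in\mathcal{K}$ I would take $M(X)=X$, let the movability morphism be $m_X=0_{X,X}$, and for every morphism $p\colon Y\to X$ declare the movability factor to be $u(p)=0_{X,Y}$.

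First I would check condition~1 of Definition~\ref{umov}: for $p\colon Y\to X$ one has $p\circ u(p)=p\circ 0_{X,Y}=0_{X,X}=m_X$ by the absorbing property applied to $p$ on the left, so Diagram~1 commutes. Next I would verify condition~2: given $p\colon Y\to X$, $q\colon Z\to X$, $r\colon Z\to Y$ with $p\circ r=q$, I must show $u(p)=r\circ u(q)$, that is, $0_{X,Y}=r\circ 0_{X,Z}$; this is again immediate from the absorbing property, now applied to $r$ on the left. Hence Diagram~2 commutes, $X$ is uniformly movable, and since $X$ was arbitrary the category $\mathcal{K}$ is uniformly movable.

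There is essentially no obstacle here: the argument uses nothing about $\mathcal{K}$ beyond the existence of a coherent system of null morphisms, and it does not even invoke the hypothesis $p\circ r=q$. The only point worth stressing is that one should not look for $M(X)$ among exotic objects — the object $X$ itself, equipped with its null endomorphism, already does the job, and both required commutativities collapse to the two absorbing identities for null morphisms.
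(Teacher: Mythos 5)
Your proof is correct and follows essentially the same idea as the paper: let all the structure consist of null morphisms, so that both conditions of Definition~\ref{umov} reduce to the absorbing identities. The only (immaterial) difference is that you take $M(X)=X$ while the paper takes $M(X)=X_0$ for a fixed object $X_0$; either choice works since only the null morphisms out of $M(X)$ are ever used.
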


\begin{proof}
For each object $X\in \mathcal{K}$ we can put $M(X)=X_{0},$
for a fixed object $X_{0}$, with $m_{X}=0_{X_{0}X}:X_{0}\rightarrow X,$ the
null morphism from $X_{0}$ to $X,$ and for an arbitrary morphism $%
p:Y\rightarrow X,$ $u(p)=0_{X_{0}Y}$. Now it is not difficult to
verify Conditions 1 and 2 of Definition \ref{umov}.

Particularly, the category \textbf{Set}$_{*}$ of pointed sets is uniformly
movable.
\end{proof}

\begin{proposition}\label{pr3}
Every category $\mathcal{K}$ with an initial object $O$ is a
uniformly movable category.
\end{proposition}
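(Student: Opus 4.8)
The plan is to use the initial object $O$ itself as the movability object of \emph{every} object of $\mathcal{K}$. Concretely, for each $X\in\mathcal{K}$ I would set $M(X)=O$ and let $m_X\colon O\to X$ be the unique morphism out of the initial object; for an arbitrary morphism $p\colon Y\to X$ I would define $u(p)\colon M(X)=O\to Y$ to be the unique morphism from $O$ to $Y$. The whole construction thus hinges on the single fact that $\mathcal{K}(O,Z)$ is a singleton for every object $Z$.

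Next I would verify the two conditions of Definition \ref{umov}. For Condition 1, the two morphisms $p\circ u(p)$ and $m_X$ both belong to $\mathcal{K}(O,X)$, which is a singleton, so they coincide and Diagram 1 commutes. For Condition 2, given $p\colon Y\to X$, $q\colon Z\to X$ and $r\colon Z\to Y$ with $p\circ r=q$, the two morphisms $u(p)$ and $r\circ u(q)$ both belong to $\mathcal{K}(O,Y)$, again a singleton, so $u(p)=r\circ u(q)$ and Diagram 2 commutes. Since $X$ was arbitrary, every object of $\mathcal{K}$ is uniformly movable, hence $\mathcal{K}$ is a uniformly movable category.

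I do not anticipate any genuine obstacle here: each required identity is forced by the universal property of $O$, with no computation involved. The only point worth emphasizing is that, unlike Proposition 1 (which needs null morphisms, i.e. a zero object), a one-sided universal object already suffices. Alternatively, one could observe that the constant functor $F\equiv O$ together with the canonical natural transformation $\varphi\colon F\to 1_{\mathcal{K}}$, whose components are the unique maps $O\to X$, is a uniform movability pair in the stronger sense of \cite{P}, so the statement also follows from the Remark above; but the direct verification sketched here is shorter and self-contained.
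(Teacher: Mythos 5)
Your proposal is correct and is essentially identical to the paper's own proof: take $M(X)=O$, let $m_X$ and each $u(p)$ be the unique morphisms out of the initial object, and note that both required identities hold because the relevant hom-sets are singletons. You merely spell out the verification that the paper leaves as an easy exercise.
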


\begin{proof}
For each object $X\in \mathcal{K}$, we can put $M(X)=O$, with $m_{X}:O\rightarrow X$, the only element in the set
$\mathcal{K}(O,X),$ and for an arbitrary morphism $p:Y\rightarrow
X$, $u(p):O\rightarrow Y$, the only element in the set
$\mathcal{K}(O,Y)$. Now it is not difficult to verify Conditions 1
and 2 of Definition \ref{umov}.
\end{proof}

Particularly, the categories \textbf{Set} of all sets and maps and
\textbf{\ Gr} of all groups and homomorphisms are uniformly movable
categories.

\begin{proposition}
An object dominated by a uniformly movable object is uniformly movable.
\end{proposition}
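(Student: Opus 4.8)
\emph{Setup.} Recall that $X$ is \emph{dominated} by $X'$ when there are morphisms $s\colon X\to X'$ and $r\colon X'\to X$ in $\mathcal{K}$ with $r\circ s=1_{X}$. Assume $X'$ is uniformly movable, and fix a uniform movability morphism $m_{X'}\colon M(X')\to X'$ together with a choice, for every morphism $p'\colon W\to X'$, of a uniform movability factor $u'(p')\colon M(X')\to W$ satisfying Conditions 1 and 2 of Definition~\ref{umov}. The plan is to build the corresponding data for $X$ by transporting everything along $r$ and $s$.

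\emph{Construction.} I would set $M(X)=M(X')$ and take as movability morphism $m_{X}=r\circ m_{X'}\colon M(X')\to X$. For an arbitrary morphism $p\colon Y\to X$ in $\mathcal{K}$, the morphism $s\circ p\colon Y\to X'$ goes into $X'$, so it has a uniform movability factor $u'(s\circ p)\colon M(X')\to Y$; define $u(p)=u'(s\circ p)$. To check Condition 1, start from $(s\circ p)\circ u'(s\circ p)=m_{X'}$, compose on the left with $r$, and use $r\circ s=1_{X}$ to obtain $p\circ u(p)=r\circ m_{X'}=m_{X}$, so Diagram~1 commutes.

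\emph{The coherence condition.} For Condition 2, take morphisms $p\colon Y\to X$, $q\colon Z\to X$ and $t\colon Z\to Y$ with $p\circ t=q$. Composing on the left with $s$ gives $(s\circ p)\circ t=s\circ(p\circ t)=s\circ q$, i.e.\ $t$ also witnesses a factorization over $X'$ of the morphisms $s\circ p$ and $s\circ q$. Applying Condition 2 for $X'$ to the triple $(s\circ p,\,s\circ q,\,t)$ yields $u'(s\circ p)=t\circ u'(s\circ q)$, which is exactly $u(p)=t\circ u(q)$; hence Diagram~2 commutes. The only point that needs care — and the closest thing to an obstacle — is precisely this observation that left-composition with the \emph{fixed} section $s$ converts every factorization $p\circ t=q$ over $X$ into a factorization over $X'$, so that the coherence already built into the chosen factors $u'$ transfers verbatim; no genuine difficulty arises, and the proof is essentially this bookkeeping.
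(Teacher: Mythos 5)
Your proof is correct and is essentially identical to the paper's: both set $M(X)=M(X')$, take $m_X=r\circ m_{X'}$, define $u(p)=u'(s\circ p)$, and verify both conditions by left-composing with the section and retraction. No differences worth noting beyond the naming of the morphisms.
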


\begin{proof}
Let $X $ be a uniformly movable object in a category $\mathcal{K}$ and $Y$ an object dominated by $X$: $Y\leq X$. Let us consider the morphisms $f:X\rightarrow Y$ and $g:Y\rightarrow X,$ with $f\circ g=1_{Y}$. Now we can take $M(Y)=M(X)$ and $m_{Y}=f\circ m_{X}$. For arbitrary morphism $p:Z\rightarrow Y$ let us define $u(p)=u(g\circ p)$. Then by the relation $(g\circ p)\circ u(g\circ p)$ $=m_{X}$ it follows ($f\circ g)\circ p\circ u(g\circ p)=f\circ m_{X}$ and thus $p\circ u(p)=m_{Y},$ which is Condition 1 of Definition \ref{umov}. Now let the morphisms $p:Z\rightarrow Y$, $q:U\rightarrow Y$, $r:U\rightarrow Z$ satisfy the relation $q=p\circ r$. Then we have $g\circ q=(g\circ p)\circ r,$ which implies $u(g\circ p)=r\circ u(g\circ q)$, i.\ e., Condition 2 of Definition \ref{umov} holds.
\end{proof}

\begin{definition}[{\cite{G2}}]
We say that a category $\mathcal{L}$ is \emph{weakly functorially dominated} by a category $\mathcal{K}$ if there are functors
$J:\mathcal{L}\to \mathcal{K}$ and $D:\mathcal{K}\to \mathcal{L}$
and a natural transformation $\psi :D\circ J\rightarrow
1_{\mathcal{L}}$.
\end{definition}

The following proposition is similar to Theorem 2 from \cite{G2}.

\begin{proposition}\label{pr1}
If the category $\mathcal{L}$ is weakly functorially dominated by a
uniformly movable category $\mathcal{K}$ then $\mathcal{L}$ is also
uniformly movable.
\end{proposition}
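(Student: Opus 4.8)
The plan is to reduce to uniform movability in $\mathcal{K}$ by transporting data along $J$, pushing the result back along $D$, and using $\psi$ to repair the composites. This runs parallel to the proof of the preceding proposition (an object dominated by a uniformly movable object is uniformly movable), with the pair of functors $(J,D,\psi)$ playing the role formerly played by the retraction $f\circ g=1_Y$.

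Fix an object $X$ of $\mathcal{L}$. First I would form $J(X)\in\mathcal{K}$ and, invoking uniform movability of $\mathcal{K}$, choose a uniform movability morphism $m_{J(X)}:M(J(X))\to J(X)$ together with the coherent family of its movability factors $u(g):M(J(X))\to B$ for $g:B\to J(X)$ in $\mathcal{K}$, as in Definition \ref{umov}. Then I would set
$$M(X):=D\bigl(M(J(X))\bigr),\qquad m_X:=\psi(X)\circ D(m_{J(X)}):D(M(J(X)))\to D(J(X))\to X.$$

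For Condition 1, given a morphism $p:Y\to X$ in $\mathcal{L}$, apply $J$ to get $J(p):J(Y)\to J(X)$, take its movability factor $u(J(p)):M(J(X))\to J(Y)$ in $\mathcal{K}$ (so $J(p)\circ u(J(p))=m_{J(X)}$), and define $u(p):=\psi(Y)\circ D(u(J(p))):D(M(J(X)))\to D(J(Y))\to Y$. The equality $p\circ u(p)=m_X$ then follows by applying $D$ to $J(p)\circ u(J(p))=m_{J(X)}$ and using the naturality square of $\psi$ at $p$, namely $p\circ\psi(Y)=\psi(X)\circ D(J(p))$. For Condition 2, given $p:Y\to X$, $q:Z\to X$, $r:Z\to Y$ in $\mathcal{L}$ with $p\circ r=q$, apply $J$ to obtain $J(p)\circ J(r)=J(q)$ in $\mathcal{K}$; Condition 2 in $\mathcal{K}$ gives $u(J(p))=J(r)\circ u(J(q))$; applying $D$ and commuting $\psi$ past $D(J(r))$ by naturality of $\psi$ at $r$ yields $u(p)=r\circ u(q)$. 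Since $X$ was arbitrary, $\mathcal{L}$ is uniformly movable.

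The computations are routine once the notation is in place, so the only genuine point is organizational: one must always apply $J$ first — Condition 2 is available only inside $\mathcal{K}$ — and only afterwards apply $D$ and use the naturality of $\psi$ to absorb the $\psi$-components. I do not anticipate any real obstacle beyond keeping this order straight and checking that the family $\{u(p)\}_p$ produced is genuinely coherent rather than merely defined pointwise; this coherence is inherited from the coherence of the chosen family in $\mathcal{K}$ together with the functoriality of $J$ and $D$.
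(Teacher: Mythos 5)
Your proposal is correct and follows essentially the same route as the paper: the same choices $M(X)=D(M(J(X)))$, $m_X=\psi(X)\circ D(m_{J(X)})$, $u(p)=\psi(Y)\circ D(u(J(p)))$, with Condition 1 verified via the naturality square of $\psi$ at $p$ and Condition 2 via functoriality of $J$, $D$ and naturality of $\psi$ at $r$. No gaps.
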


\begin{proof}
For an object $X\in \mathcal{L}$, we can take $M(X)=D(M(J(X))$, and $m_X=\psi (X)\circ D(m_{J(X)}):M(X)\rightarrow X$. Then if $p:Y\rightarrow X$ is an arbitrary morphism in $\mathcal{L}$, we put $u(p)=\psi (Y)\circ D(u(J(p))):M(X) \rightarrow Y$. Now we can verify
the conditions from Definition \ref{umov}. For the condition 1 we have:
\begin{multline*}
p\circ u(p)=[p\circ \psi (Y)] \circ D(u(J(p))) = [\psi (X)\circ D(J(p))] \circ D(u(J(p))) = \\
\psi (X)\circ [J(p)\circ D(u(J(p)))]=\psi (X)\circ D(m_{J(X)})=m_X.
\end{multline*}

Then, for condition 2, if $p:Y\rightarrow X$, $q:Z\rightarrow X$ and $r:Z\rightarrow Y$ are morphisms in $\mathcal{L}$ satisfying $p\circ r=q$, then $J(p)\circ J(r)=J(q)$. This implies $J(r)\circ u(J(q))=u(J(p))$ and by applying the functor $D$ we deduce $D(J(r))\circ D(u(J(q)))=D(u(J(p)))\Rightarrow \psi(Y)\circ D(J(r))\circ D(u(J(q)))= \psi (Y)\circ D(u(J(p))\Rightarrow r\circ \psi (Z)\circ D(u(J(q)))=u(p)$ which means $r\circ u(q)=u(p)$.
\end{proof}

Particularly, Proposition \ref{pr1} holds if $\mathcal{L}$ is functorial dominated by $\mathcal{K}$, i.\,e., $D\circ J=1_{\mathcal{%
L}}$:

\begin{corollary}\label{Cor1}
If the category $\mathcal{L}$ is functorial dominated by a
uniformly movable category $\mathcal{K}$ then $\mathcal{L}$ is also
uniformly movable.
\end{corollary}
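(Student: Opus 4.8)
The plan is to deduce Corollary \ref{Cor1} directly from Proposition \ref{pr1} by observing that \emph{functorial domination} is a special case of \emph{weak functorial domination}. Concretely, if $\mathcal{L}$ is functorially dominated by $\mathcal{K}$, there are functors $J:\mathcal{L}\to\mathcal{K}$ and $D:\mathcal{K}\to\mathcal{L}$ with $D\circ J=1_{\mathcal{L}}$. Taking $\psi=\mathrm{id}_{1_{\mathcal{L}}}:D\circ J\to 1_{\mathcal{L}}$ to be the identity natural transformation (which is legitimate precisely because $D\circ J$ equals $1_{\mathcal{L}}$ on the nose, not merely up to isomorphism), the triple $(J,D,\psi)$ exhibits $\mathcal{L}$ as weakly functorially dominated by $\mathcal{K}$ in the sense of the preceding definition.

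With that observation in hand, the corollary is immediate: since $\mathcal{K}$ is uniformly movable by hypothesis, Proposition \ref{pr1} applies verbatim and yields that $\mathcal{L}$ is uniformly movable. There is essentially nothing else to check. If one wanted to be fully explicit, one could unwind the construction from the proof of Proposition \ref{pr1} in this special case: for an object $X\in\mathcal{L}$ one gets $M(X)=D(M(J(X)))$ and, because $\psi$ is the identity, $m_X=D(m_{J(X)})$ and $u(p)=D(u(J(p)))$; the verifications of Conditions 1 and 2 of Definition \ref{umov} then reduce to applying $D$ to the corresponding identities holding in $\mathcal{K}$, using functoriality of $D$ and the relation $D\circ J=1_{\mathcal{L}}$.

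I do not anticipate any genuine obstacle here: the only point requiring a moment's care is the remark that the identity natural transformation $1_{\mathcal{L}}\to 1_{\mathcal{L}}$ is an admissible choice of $\psi$ when $D\circ J$ is literally the identity functor, so that functorial domination genuinely specializes weak functorial domination rather than merely resembling it. Once that is noted, the corollary follows by a single invocation of Proposition \ref{pr1}.

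\begin{proof}
If $\mathcal{L}$ is functorially dominated by $\mathcal{K}$, then there are functors $J:\mathcal{L}\to\mathcal{K}$ and $D:\mathcal{K}\to\mathcal{L}$ with $D\circ J=1_{\mathcal{L}}$. Then $\psi=1_{1_{\mathcal{L}}}:D\circ J\to 1_{\mathcal{L}}$ is a natural transformation, so $\mathcal{L}$ is weakly functorially dominated by $\mathcal{K}$. Since $\mathcal{K}$ is uniformly movable, Proposition \ref{pr1} implies that $\mathcal{L}$ is uniformly movable.
\end{proof}
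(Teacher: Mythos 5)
Your proof is correct and is exactly the paper's approach: the corollary is stated as the special case of Proposition \ref{pr1} in which $D\circ J=1_{\mathcal{L}}$, with $\psi$ taken to be the identity natural transformation. Nothing further is needed.
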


\begin{proposition}
A product of categories $\mathcal{K}=\prod\limits_{i\in I}\mathcal{K}_i$ is uniformly movable if and only if every category $\mathcal{K}_i, i\in I$, is uniformly movable.
\end{proposition}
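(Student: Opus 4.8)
The proof is a standard "componentwise" argument for products, so the plan is to verify both implications by working coordinate by coordinate.

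For the forward implication, suppose $\mathcal{K}=\prod_{i\in I}\mathcal{K}_i$ is uniformly movable and fix $i_0\in I$. I would like to recover uniform movability of $\mathcal{K}_{i_0}$ by embedding it into the product. The natural device is the functor $E_{i_0}:\mathcal{K}_{i_0}\to\mathcal{K}$ that sends an object $A$ to the tuple which is $A$ in coordinate $i_0$ and a fixed object $O_i$ (chosen once for all) in every other coordinate, and similarly on morphisms using identities off $i_0$; the projection $P_{i_0}:\mathcal{K}\to\mathcal{K}_{i_0}$ satisfies $P_{i_0}\circ E_{i_0}=1_{\mathcal{K}_{i_0}}$. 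Thus $\mathcal{K}_{i_0}$ is functorially dominated by $\mathcal{K}$, and Corollary \ref{Cor1} gives uniform movability of $\mathcal{K}_{i_0}$ immediately. (If one is uneasy about choosing the $O_i$ — which requires each $\mathcal{K}_i$ to be nonempty — one can alternatively note that a product over an empty index set is the terminal category, which has an initial object and is uniformly movable by Proposition \ref{pr3}, and for nonempty $I$ the objects of $\mathcal{K}$ supply the needed $O_i$.)

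For the converse, suppose each $\mathcal{K}_i$ is uniformly movable, with movability data $M_i(\cdot)$, $m^i_{(\cdot)}$, $u_i(\cdot)$. Given an object $X=(X_i)_{i\in I}$ of $\mathcal{K}$, I would set $M(X)=(M_i(X_i))_{i\in I}$ and $m_X=(m^i_{X_i})_{i\in I}$, using that morphisms and composition in the product category are computed coordinatewise. For a morphism $p=(p_i):Y\to X$ define $u(p)=(u_i(p_i))_{i\in I}:M(X)\to Y$. Condition 1 then reduces, in each coordinate $i$, to $p_i\circ u_i(p_i)=m^i_{X_i}$, which holds by uniform movability of $\mathcal{K}_i$; since equality of morphisms in the product is equality in every coordinate, $p\circ u(p)=m_X$. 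For Condition 2, if $p:Y\to X$, $q:Z\to X$, $r:Z\to Y$ satisfy $p\circ r=q$, then $p_i\circ r_i=q_i$ for every $i$, so $u_i(p_i)=r_i\circ u_i(q_i)$ by Condition 2 in $\mathcal{K}_i$; reassembling over $i$ gives $u(p)=r\circ u(q)$.

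I do not expect any serious obstacle here: the only point that deserves a word is the coordinatewise description of the product category (objects, morphisms, composition, and equality of morphisms all decompose over $I$), after which both directions are essentially bookkeeping. The mild subtlety is the empty-product / nonemptiness issue in the forward direction, which is why routing it through Corollary \ref{Cor1} (or, in the degenerate case, Proposition \ref{pr3}) is cleaner than writing out an ad hoc argument.
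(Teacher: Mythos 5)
Your proof is correct and follows essentially the same route as the paper: the forward direction via the section/retraction pair $J_{i_0}$, $P_{i_0}$ with $P_{i_0}\circ J_{i_0}=1_{\mathcal{K}_{i_0}}$ and Corollary \ref{Cor1}, and the converse by the coordinatewise construction $M(X)=(M(X_i))$, $m_X=(m_{X_i})$, $u(p)=(u(p_i))$. Your added remark on the nonemptiness of the factors and the empty index set is a small refinement the paper passes over silently.
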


\begin{proof}
Let $\mathcal{K}=\prod\limits_{i\in I}\mathcal{K}_i$ be uniformly movable category. For a fixed index $i_{0}\in I$ and any $i\in I, i\neq i_{0}$, select an object $X_{i}^{0}\in \mathcal{K}_{i}$. Then we consider the following functors: $P_{i_{0}}:$ $\mathcal{K\rightarrow K}_{i_{0}}$, the projection, and $J_{i_{o}}:\mathcal{K}_{i_{0}}\rightarrow \mathcal{K}$ defined by $J_{i_{0}}(X_{i_{0}})=(X_{i}^{\prime })_{i\in I}$, where $X_{i_{0}}^{\prime}=X_{i_{0}}$ and $X_{i}^{\prime }=X_{i}^{0}$, $i\neq i_0$, and for a morphism $f:X_{i_{0}}\rightarrow Y_{i_{0}}$ in $\mathcal{K}_{i_{0}}$, $J_{i_{0}}(f)=(f_{i}^{\prime })_{i\in I}:J_{i_{0}}(X_{i_{0}})\rightarrow J_{i_{0}}(Y_{i_{0}})$ is given by $f_{i}^{\prime }=1_{X_{i}^{0}}$, if $i\neq i_{0}$ and $f_{i_{0}}^{\prime }=f$. Then $P_{i_0}\circ J_{i_0}=1_{\mathcal{K}_{i_0}}$ and we can apply Corollary \ref{Cor1}.

Now, let all categories $\mathcal{K}_i, i\in I,$ are uniformly movable and let us prove that $\mathcal{K}=\prod\limits_{i\in I}\mathcal{K}_i$ is also uniformly movable.

If $X=(X_{i})_{i\in I}$, define $M(X)=(M(X_{i}))_{i\in I}$, with $m_{X}=(m_{X_{i}})_{i\in I}$, and if $p=(p_{i})_{i\in I}:(X_{i})_{i\in I}\rightarrow (Y_{i})_{i\in I}$, then we put $u(p)=(u(p_{i}))_{i\in I}$. With these notations the conditions of Definition \ref{umov} are immediately verified.
\end{proof}

Now let us consider  a category $\mathcal{K}$ with pull-back diagrams. It means that for any pair of morphisms $f:X\to Z$ and $g:Y\to Z$ there exist a commutative diagram
$$\begin{array}{lll}
X\times _{Z}Y & \stackrel{p_{X}}{\rightarrow } & X \\
\downarrow_{p_{Y}} &  & _{f}\downarrow \\
Y & \stackrel{g}{\rightarrow } & Z
\end{array}
$$
called a pull-back diagram, such that for every morphisms $u_{X}:U\rightarrow X$, $u_{Y}:U\rightarrow Y$, satisfying the equality $f\circ u_{X}=g\circ u_{Y}$ there is a unique morphism, which we denote by $u_{X}\times_{Z}u_{Y}:U\rightarrow X\times _{Z}Y$, such that the relations $p_{X}\circ (u_{X}\times _{Z}u_{Y})=u_{X}$ and $p_{Y}\circ
(u_{X}\times _{Z}u_{Y})=u_{Y}$ hold.

\begin{proposition}\label{pr2}
Let $\mathcal{K}$ be a category with pull-back diagrams. If an object $Z\in \mathcal{K}$ is uniformly movable then for any pair of morphisms $f:X\rightarrow Z$ and $g:Y\rightarrow Z$ the following relations hold

\begin{center}
(*) $u(f)\times _{Z}u(g)=u(f\circ p_{X})=u(g\circ p_{Y}).$
\end{center}
\end{proposition}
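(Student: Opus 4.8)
The plan is to construct $u(f)\times_Z u(g)$ directly from the universal property of the pull-back and then to identify it with $u(f\circ p_X)$ by comparing two morphisms into the pull-back, using the uniqueness clause of that universal property together with Condition 2 of Definition \ref{umov}.

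First I would record the data that uniform movability of $Z$ supplies: an object $M(Z)$, a movability morphism $m_Z:M(Z)\to Z$, and movability factors $u(f):M(Z)\to X$ and $u(g):M(Z)\to Y$ with $f\circ u(f)=m_Z=g\circ u(g)$ by Condition 1. In particular $f\circ u(f)=g\circ u(g)$, so the compatibility hypothesis in the definition of the pull-back is satisfied, and the universal property produces the unique morphism $u(f)\times_Z u(g):M(Z)\to X\times_Z Y$ characterized by
$$p_X\circ\big(u(f)\times_Z u(g)\big)=u(f),\qquad p_Y\circ\big(u(f)\times_Z u(g)\big)=u(g).$$

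Next I would apply Condition 2 of Definition \ref{umov} twice, with the uniformly movable object $Z$ playing the role of the common codomain. Taking $p=f:X\to Z$, $q=f\circ p_X:X\times_Z Y\to Z$ and $r=p_X:X\times_Z Y\to X$ (so that $p\circ r=q$) gives $u(f)=p_X\circ u(f\circ p_X)$; taking $p=g:Y\to Z$, $q=g\circ p_Y:X\times_Z Y\to Z$ and $r=p_Y:X\times_Z Y\to Y$ (again $p\circ r=q$) gives $u(g)=p_Y\circ u(g\circ p_Y)$. Since $f\circ p_X=g\circ p_Y$ as morphisms of $\mathcal{K}$, the factors $u(f\circ p_X)$ and $u(g\circ p_Y)$ coincide — call this common morphism $v:M(Z)\to X\times_Z Y$ — which already yields the second equality in $(*)$. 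Then $v$ satisfies $p_X\circ v=u(f)$ and $p_Y\circ v=u(g)$, so by the uniqueness part of the pull-back's universal property $v=u(f)\times_Z u(g)$, which is the first equality in $(*)$.

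The only genuinely delicate point is the bookkeeping: one must check that $f\circ u(f)=g\circ u(g)$ before the expression $u(f)\times_Z u(g)$ is even legitimate (it is, both sides being $m_Z$), and one must invoke Condition 2 with the right matching of roles — the uniformly movable object here is $Z$, not the object called $X$ in the statement of Definition \ref{umov}. Once those identifications are made correctly, the proof is a one-line comparison of morphisms into the pull-back, with no computation beyond the two applications of Condition 2.
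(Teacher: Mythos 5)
Your proof is correct and is essentially the paper's own argument: both form $u(f)\times_Z u(g)$ from $f\circ u(f)=m_Z=g\circ u(g)$, apply Condition~2 of Definition~\ref{umov} to the factorizations $f\circ p_X$ and $g\circ p_Y$ of the common morphism $t=f\circ p_X=g\circ p_Y$ to get $p_X\circ u(t)=u(f)$ and $p_Y\circ u(t)=u(g)$, and conclude by the uniqueness clause of the pull-back. Your explicit remark that $u(f\circ p_X)=u(g\circ p_Y)$ holds simply because the two composites are the same morphism is a useful clarification of what the paper leaves implicit by naming that composite $t$.
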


\begin{proof}
Let $f:X\rightarrow Z$, $g:Y\rightarrow Z$ be two arbitrary morphisms. Consider the morphisms $u(f):M(Z)\rightarrow X$, with $f\circ u(f)=m_{Z}$ and $u(g):M(Z)\rightarrow Y$, with $g\circ u(g)=m_{Z}$ (Diagram 3).

Then the equality $f\circ
u(f)=g\circ u(g)$ permits to consider the morphism $u(f)\times_{Z}u(g):M(Z)\rightarrow X\times _{Z}Y$. Now if we denote $t=f\circ p_{X}=g\circ p_{Y}$, we obtain another morphism $u(t):M(Z)\rightarrow X\times _{Z}Y$. But by condition 2 of Definition \ref{umov}, the relations $t=f\circ p_{X}=g\circ p_{Y}$ imply $u(f)=p_{X}\circ u(t)$ and $u(g)=p_{Y}\circ u.$ These relations and the uniqueness of the morphism $u(f)\times _{Z}u(g)$, prove that $u(t)=u(f)\times _{Z}u(g)$, i.\,e., (*).
\end{proof}

\begin{center}
\begin{picture}(70,100)
\put(0,0){$Y$} \put(80,0){$Z$} \put(0,45){$X\times_{Z}Y$} \put(80,45){$X$} \put(-64,85){$M(Z)$}

\put(10,4){\vector(1,0){60}} \put(35,7){\scriptsize $g$}
\put(47,48){\vector(1,0){25}} \put(50,51){\scriptsize $p_X$}
\put(3,40){\vector(0,-1){25}} \put(6,25){\scriptsize $p_Y$}
\put(85,40){\vector(0,-1){25}} \put(77,25){\scriptsize $f$}
\put(-48,75){\vector(2,-3){40}} \put(-44,35){\scriptsize $u(g)$}
\put(-30,84){\vector(4,-1){100}} \put(10,75){\scriptsize $u(f)$}
\put(-36,77){\vector(3,-2){30}}
\end{picture}

Diagram 3.
\end{center}

\begin{remark}
The dual notions of movability and uniform movability can be defined. An object $X$ of the category $\mathcal{K}$ is (uniformly) co-movable, if there are an object $M(X)\in \mathcal{K}$ and a morphism $m_{X}^{0}:M(X)\leftarrow X$ in $\mathcal{K}$ that satisfy the following condition(s): for any object $Y\in \mathcal{K}$ and any morphism $p:X\rightarrow Y$ in $\mathcal{K}$ there exists a morphism $u^{0}(p):M(X)\leftarrow Y$ in $\mathcal{K}$ satisfying the equality $m_{X}^{0}=u^{0}(p)\circ p$ (and if $p:X\rightarrow Y,q:X\rightarrow Z$ and $r:Y\rightarrow Z$ are morphisms in $\mathcal{K}$ such that $q=r\circ p,$ then $u^{0}(p)=u^{0}(q)\circ r).$ A category $\mathcal{K}$ is called (uniformly) co-movable if all its objects are (uniformly) co-movable. This is equivalent with the fact that the dual category $\mathcal{K}$ is (uniformly) movable.
\end{remark}

\section{Main result}

Recall that if $\mathcal{T}$ is a category, with $\mathcal{P}$ a
subcategory of $\mathcal{T}$ and $X\in \mathcal{T}$, then the
\textsl{comma category} \textsl{of} $X$ \textsl{over} $\mathcal{P}$
is the category denoted by $X_{\mathcal{P}}$ having as objects all
morphisms $p:X\rightarrow P$ in $\mathcal{T}$, with $P\in
\mathcal{P},$ and as morphisms $(X\stackrel{p}{\rightarrow
}P)\rightarrow (X\stackrel{p^{\prime }}{\rightarrow }P^{\prime})$,
all morphisms $u:P\rightarrow P^{\prime }$ in $\mathcal{P}$ such
that the following diagram commutes:

\begin{center}
\begin{picture}(70,90)
\put(0,35){$X$} \put(70,0){$P'.$} \put(70,70){$P$}

\put(13,42){\vector(2,1){48}} \put(30,56){\scriptsize $p$}
\put(13,35){\vector(2,-1){51}} \put(27,15){\scriptsize $p\,'$}
\put(74,64){\vector(0,-1){48}} \put(76,34){\scriptsize $u$}
\end{picture}
\end{center}

\quad
Now recall from \cite{MS2} (Ch. II, \S\, 6,7) the notions of movability and
uniformly movability in terms of inverse systems.

Let $\mathcal{T}$ be a category. Then an object
$\mathbf{X}=(X_{\lambda},p_{\lambda \lambda ^{\prime }},\Lambda )$
of $pro-\mathcal{T}$ is said to be movable provide every $\lambda
\in \Lambda $ admits a $m(\lambda )\geq\lambda $ (called a
\textsl{movability index} of $\lambda $ ) such that any $\lambda
^{\prime \prime }\geq \lambda $ admits a morphism
$r^{\lambda}:X_{m(\lambda )}\rightarrow X_{\lambda ^{\prime \prime
}}$ of $\mathcal{K}$ which satisfies
$$p_{\lambda \lambda ^{\prime\prime }} \circ r^{\lambda}=p_{\lambda,m(\lambda )},$$
i. e., makes the following diagram commutative
\begin{center}
\begin{picture}(70,90)
\put(0,35){$X_\lambda$} \put(70,0){$X_{\lambda''}$} \put(70,70){$X_{m(\lambda)}$}

\put(63,66){\vector(-2,-1){44}} \put(17,60){\scriptsize
$p_{\lambda,m(\lambda)}$} \put(63,10){\vector(-2,1){44}}
\put(27,15){\scriptsize $p_{\lambda\lambda''}$}
\put(74,64){\vector(0,-1){48}} \put(78,34){\scriptsize $r^\lambda$}
\end{picture}
\end{center}

\quad

The essential feature of this condition is that $p_{\lambda
,m(\lambda )}$ factors through $X_{\lambda ^{\prime \prime }}$ for
$\lambda ^{\prime \prime}$ arbitrary large (note that $r^{\lambda }$
is not a bonding morphism).

Then an object $\mathbf{X}=(X_{\lambda },p_{\lambda \lambda
^{\prime}},\Lambda )$ of $pro-\mathcal{K}$ is \textsl{uniformly
movable} if every $\lambda \in \Lambda $ admits a $m(\lambda )\geq
\lambda $ (called a \textsl{uniform} \textsl{movability index} of
$\lambda $ ) such that there is a morphism $\mathbf{r}(\lambda
):\mathbf{X}_{m(\lambda )}\rightarrow \mathbf{X}$ in
$pro-\mathcal{T}$ satisfying
$$\mathbf{p}_{\lambda }\circ \mathbf{r}(\lambda )=p_{\lambda,m(\lambda )},$$
where $\mathbf{p}_{\lambda }:\mathbf{X\rightarrow }X_{\lambda }$ is
the morphism of $pro-\mathcal{T}$ given by $1_{X_{\lambda }}$,
i.\,e., $\mathbf{p}_{\lambda }$ is the restriction of $\mathbf{X}$
to $X_{\lambda }$. Consequently, $p_{\lambda,m(\lambda )}$ factors
through $\mathbf{X}$. Note that the morphism $\mathbf{r}(\lambda)$ determines for every
$\nu \in \Lambda $ a morphism
\begin{center}
$\mathbf{r}(\lambda )^{\nu }:X_{m(\lambda )}\rightarrow X_{\nu },$
\end{center}
in $\mathcal{T},$ such that
\begin{center}
$p_{\nu \nu ^{\prime }}\circ \mathbf{r}(\lambda )^{\nu ^{\prime }}=\mathbf{r}(\lambda )^{\nu },$ if $\nu \leq \nu ^{\prime }$, and $\mathbf{r}(\lambda)^{\lambda }=p_{\lambda,m(\lambda )}.$
\end{center}
In particular, for any $\nu \geq \lambda $ one obtains
$p_{\lambda,m(\lambda )}=\mathbf{r}(\lambda )^{\lambda }=p_{\lambda
\nu }\circ \mathbf{r}(\lambda )^{\nu }$, so that uniform movability
implies movability.

We mention also that the movability and uniform movability for
inverse systems remain valid under isomorphisms of such systems \cite[p. 159--161]{MS2}.

Another notion which we need in this paragraph is that of expansion system of an object.

If $\mathcal{T}$ is a category and $\mathcal{P}$ is a subcategory of $\mathcal{T}$, then for an object $X$ of $\mathcal{T}$, a $\mathcal{P}$-\textsl{expansion} of $X$ is a morphism in $pro-\mathcal{T}$ of $X$ (as rudimentary system) to an inverse system $\mathbf{X}$ $=(X_{\lambda},p_{\lambda \lambda ^{\prime }},\Lambda )$ in $\mathcal{P}$, $\mathbf{p}:X\rightarrow \mathbf{X},$ with the following universal property:

For any inverse system $\mathbf{Y}=(Y_{\mu },q_{\mu \mu ^{\prime }},M)$ in the subcategory $\mathcal{P}$ (called a $\mathcal{P-}$ \textsl{system}) and any morphism $\mathbf{h}:X\rightarrow \mathbf{Y}$ in $pro-\mathcal{T}$, there exists a unique morphism $\mathbf{f:X\rightarrow Y}$ in $pro-\mathcal{T}$ such that $\mathbf{h=f\circ p}$, i.\,e., the following diagram commutes.

\begin{center}
$
\begin{array}{lll}
X & \stackrel{\mathbf{p}}{\rightarrow } & \mathbf{X} \\
& \searrow_{{\mathbf{h}}} & \downarrow _{\mathbf{f}} \\
&  & \mathbf{Y}
\end{array}
$
\end{center}

If $\mathbf{p}:X\rightarrow \mathbf{X}$, $\mathbf{p}':X\rightarrow \mathbf{X}^{\prime}$ are two $\mathcal{P-}$ expansions of the same object $X$, then there is a unique isomorphism $\mathbf{i}:\mathbf{X\rightarrow X}^{\prime}$ such that $\mathbf{i}\circ \mathbf{p}=\mathbf{p}'$. This isomorphism is called the \textsl{natural isomorphism}.

The subcategory $\mathcal{P}$ is called a \textsl{dense} subcategory of the category $\mathcal{T}$ provided every object $X\in \mathcal{T}$ admits a $\mathcal{P}$-expansion $\mathbf{p}:X\rightarrow \mathbf{X}$.

If $\mathbf{p}:X\rightarrow \mathbf{X}$, $\mathbf{p}':X\rightarrow \mathbf{X}^{\prime}$ and $\mathbf{q}:Y\rightarrow \mathbf{Y}$, $\mathbf{q}':Y\rightarrow \mathbf{Y}^{\prime}$ are $\mathcal{P}$-expansions, then two morphisms $\mathbf{f:X\rightarrow Y}$, $\mathbf{f}^{\prime }\mathbf{:X}^{\prime }\mathbf{\rightarrow Y}^{\prime }$, in $pro-\mathcal{P}$, are equivalent, $\mathbf{f}\sim \mathbf{f}^{\prime }$ provided the following diagram in $pro-%
\mathcal{P}$ commutes

\begin{center}
$
\begin{array}{lll}
\mathbf{X} & \stackrel{\mathbf{i}}{\rightarrow } & \mathbf{X}^{\prime } \\
\downarrow _{\mathbf{f}} &  & \downarrow _{\mathbf{f}^{\prime }} \\
\mathbf{Y} & \stackrel{\mathbf{j}}{\rightarrow } & \mathbf{Y}^{\prime }.
\end{array}
$
\end{center}

Now if $\mathcal{P}$ is a dense subcategory of the category $\mathcal{P}$, then the \textsl{shape category} for $(\mathcal{T},\mathcal{P)}$, denoted by $Sh_{(\mathcal{T},\mathcal{P)}},$ has as objects all the objects of $\mathcal{T}$ and morphisms $F:X\rightarrow Y$ are equivalence classes with respect to $\sim $ of morphisms $\mathbf{f:X\rightarrow Y}$ in $pro-\mathcal{P}$, for some $\mathcal{P}$ - expansions $\mathbf{p}:X\rightarrow \mathbf{X}$ and $\mathbf{q}:Y\rightarrow \mathbf{Y}$.

Now, an object $X\in \mathcal{T}$, is said \textsl{movable
(uniformly movable)} \textsl{in }$Sh_{(\mathcal{T},\mathcal{P)}}$ or
simply \textsl{movable (uniformly movable)} if it has a movable
(uniformly movable) $\mathcal{P}$-expansion. This definition is
correct since the properties of movability and uniform movability
for inverse systems are invariant with respect to isomorphisms in
$pro-\mathcal{P}$.

If \textbf{HTop }is the homotopy category of topological spaces, then the homotopy category \textbf{HPol} of polyhedra is a dense subcategory of \textbf{HTop }and a topological space $X$ is called (\textsl{uniformly) movable} if $X$ is \textbf{HPol}-(uniformly) movable.

Now we can establish the main theorem.

\begin{theorem}\label{th1}
Let $\mathcal{T}$ be a category, $\mathcal{P}$ a subcategory of \
$\mathcal{T}$, and let $X\in \mathcal{T}$ be any object and
$\mathbf{p}=(p_{\lambda }):X\rightarrow \mathbf{X}=(X_{\lambda
},p_{\lambda \lambda^{\prime }},\Lambda )$ a $\mathcal{P}$-expansion
of $X$. Then $\mathbf{X}$ is a uniformly movable inverse system if
and only if the comma category $X_{\mathcal{P}}$ of $X$ in
$\mathcal{T}$ over $\mathcal{P}$ is a uniformly movable category.
\end{theorem}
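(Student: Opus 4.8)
The plan is to exploit the fact that the expansion $\mathbf{p}\colon X\to\mathbf{X}$ makes the inverse system $\mathbf{X}$ ``dense'' in the comma category $X_{\mathcal{P}}$. Concretely, the universal property of the expansion says that for every $P\in\mathcal{P}$ the canonical map $\varinjlim_{\lambda}\mathcal{T}(X_\lambda,P)\to\mathcal{T}(X,P)$ is a bijection, and unwinding this yields two facts I shall use constantly. Writing $P_\lambda:=(X\xrightarrow{p_\lambda}X_\lambda)$ for the object of $X_{\mathcal{P}}$ determined by $p_\lambda$: \emph{(D1)} every object $a\colon X\to A$ of $X_{\mathcal{P}}$ factors as $a=f_a\circ p_{\lambda_a}$ for some $\lambda_a\in\Lambda$ and some $f_a\colon X_{\lambda_a}\to A$ in $\mathcal{P}$, i.e. $f_a$ is a morphism $P_{\lambda_a}\to a$ of $X_{\mathcal{P}}$; \emph{(D2)} if $g\colon X_\lambda\to P$ and $g'\colon X_{\lambda'}\to P$ satisfy $g\circ p_\lambda=g'\circ p_{\lambda'}$, then $g\circ p_{\lambda\mu}=g'\circ p_{\lambda'\mu}$ for some $\mu\ge\lambda,\lambda'$. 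Note also that $\lambda\mapsto P_\lambda$, $(\lambda\le\lambda')\mapsto p_{\lambda\lambda'}$, defines a functor $\Lambda^{\mathrm{op}}\to X_{\mathcal{P}}$, and that for each fixed $\lambda$ the set $\{\nu:\nu\ge\lambda\}$ is cofinal in $\Lambda$, so that a compatible family $(t_\nu\colon Z\to X_\nu)_{\nu\ge\lambda}$ in $\mathcal{T}$ determines a morphism of $pro$-$\mathcal{T}$ from the rudimentary object $Z$ to $\mathbf{X}$.

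For the implication ``$X_{\mathcal{P}}$ uniformly movable $\Rightarrow\mathbf{X}$ uniformly movable'' I fix $\lambda\in\Lambda$ and apply uniform movability of $X_{\mathcal{P}}$ to the object $P_\lambda$. This produces $M(P_\lambda)=(X\xrightarrow{c}C)$ and a movability morphism $m_{P_\lambda}=n\colon C\to X_\lambda$ with $n\circ c=p_\lambda$. Using (D1) I may assume $c=h\circ p_{\mu_0}$ with $\mu_0\ge\lambda$ and $h\colon X_{\mu_0}\to C$; then $n\circ h$ and $p_{\lambda\mu_0}$ become equal after precomposition with $p_{\mu_0}$, so by (D2) there is $\mu_1\ge\mu_0$ with $n\circ h\circ p_{\mu_0\mu_1}=p_{\lambda\mu_1}$, and I set $m(\lambda):=\mu_1$. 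For each $\nu\ge\lambda$ the bonding morphism $p_{\lambda\nu}\colon P_\nu\to P_\lambda$ is a morphism of $X_{\mathcal{P}}$; Condition~1 of Definition~\ref{umov} gives $s_\nu:=u(p_{\lambda\nu})\colon C\to X_\nu$ with $s_\nu\circ c=p_\nu$ and $p_{\lambda\nu}\circ s_\nu=n$, while Condition~2, applied to $p_{\lambda\nu}\circ p_{\nu\nu'}=p_{\lambda\nu'}$, forces $s_\nu=p_{\nu\nu'}\circ s_{\nu'}$ for $\nu\le\nu'$. Thus $(s_\nu)_{\nu\ge\lambda}$ is a compatible family and defines $\mathbf{s}\colon C\to\mathbf{X}$ in $pro$-$\mathcal{T}$. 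Finally $\mathbf{r}(\lambda):=\mathbf{s}\circ h\circ p_{\mu_0,m(\lambda)}\colon X_{m(\lambda)}\to\mathbf{X}$ satisfies $\mathbf{p}_\lambda\circ\mathbf{r}(\lambda)=s_\lambda\circ h\circ p_{\mu_0,m(\lambda)}=n\circ h\circ p_{\mu_0,m(\lambda)}=p_{\lambda,m(\lambda)}$ (since $p_{\lambda\lambda}=1$ forces $s_\lambda=n$), which is precisely uniform movability of $\mathbf{X}$ at $\lambda$. The decisive point here is Condition~2: dropping it, the $s_\nu$ need not cohere and one recovers only the non-uniform movability of $\mathbf{X}$, paralleling the situation in \cite{G1}.

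For the converse, ``$\mathbf{X}$ uniformly movable $\Rightarrow X_{\mathcal{P}}$ uniformly movable'', I fix an object $a$ and write $a=f_a\circ p_{\lambda_a}$ as in (D1). Let $\mu:=m(\lambda_a)$ be a uniform movability index with associated $pro$-morphism $\mathbf{r}(\lambda_a)\colon X_\mu\to\mathbf{X}$ (components $\mathbf{r}(\lambda_a)^\nu$, $\mathbf{r}(\lambda_a)^{\lambda_a}=p_{\lambda_a\mu}$), and put $M(a):=P_\mu$ with $m_a:=f_a\circ p_{\lambda_a\mu}\colon X_\mu\to A$; this is a morphism $P_\mu\to a$ since $(f_a\circ p_{\lambda_a\mu})\circ p_\mu=f_a\circ p_{\lambda_a}=a$. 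Given $\varphi\colon b\to a$ in $X_{\mathcal{P}}$, write $b=f_b\circ p_{\lambda_b}$; from $\varphi\circ f_b\circ p_{\lambda_b}=a=f_a\circ p_{\lambda_a}$ and (D2) choose $\rho\ge\lambda_a,\lambda_b$ with $\varphi\circ f_b\circ p_{\lambda_b\rho}=f_a\circ p_{\lambda_a\rho}$, and set $u(\varphi):=f_b\circ p_{\lambda_b\rho}\circ\mathbf{r}(\lambda_a)^{\rho}\colon X_\mu\to B$. The relation $p_{\lambda_a\rho}\circ\mathbf{r}(\lambda_a)^{\rho}=\mathbf{r}(\lambda_a)^{\lambda_a}=p_{\lambda_a\mu}$ then gives $\varphi\circ u(\varphi)=f_a\circ p_{\lambda_a\mu}=m_a$, which is Condition~1, and Condition~2 is obtained by running a factorization $q=\varphi\circ r$ through the same recipe and invoking (D2) to see that the two prescriptions for $u$ agree after passing to a common index.

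The step I expect to be the real obstacle is this last one: one must check that $u(\varphi)$ as defined is a \emph{bona fide} morphism of $X_{\mathcal{P}}$, i.e. that $u(\varphi)\circ p_\mu=b$, and that its value is independent of the auxiliary choices $(\lambda_b,f_b,\rho)$, so that Condition~2 genuinely holds. Since $\mathbf{r}(\lambda_a)^{\rho}$ is pinned down only through its composite with $p_{\lambda_a\rho}$, establishing these identities requires choosing the equalizing indices from (D2) with care and, in the topological setting, using a $\mathcal{P}$-expansion with a sufficiently rigid (e.g. cofinite directed) index set so that all the relevant comparisons can be made at a single stage; this is the place where the weakening of the definition of \cite{P} to Definition~\ref{umov} really pays off. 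By contrast, the ``$X_{\mathcal{P}}$ uniformly movable $\Rightarrow\mathbf{X}$ uniformly movable'' direction is, as above, essentially a direct transcription, with Condition~2 of Definition~\ref{umov} carrying out the upgrade from ordinary movability to uniform movability.
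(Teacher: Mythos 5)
Your argument follows, in both directions, essentially the same route as the paper's own proof. In the first direction it is complete and correct: your $C$, $h$, $n$, $\mu_0$, $\mu_1$, $s_\nu$ are exactly the paper's $Q'$, $\widetilde{f'}$, $\eta$, $\widetilde{\lambda}$, $\lambda'$, $u(p_{\lambda\lambda''})$, and the observation that $p_{\lambda\lambda}=1$ forces $s_\lambda=n$ is precisely how the paper obtains $r^{\lambda}=p_{\lambda\lambda'}$. In the converse direction your $M(a)=P_\mu$, $m_a=f_a\circ p_{\lambda_a\mu}$ and $u(\varphi)=f_b\circ p_{\lambda_b\rho}\circ\mathbf{r}(\lambda_a)^{\rho}$ coincide with the paper's $M(f)$, $m_f$ and $u(\eta')$, and your verification of Condition 1 of Definition \ref{umov} is the paper's computation. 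The part you only sketch, Condition 2, is carried out in the paper exactly along the lines you indicate, and it needs no cofiniteness of $\Lambda$: given $\widetilde{\eta}\,'=\eta'\circ\varphi$ one expresses both $u(\eta')$ and $u(\widetilde{\eta}\,')$ through components of the \emph{single fixed} pro-morphism $\mathbf{r}(\lambda_a)$, chooses by directedness an index $\lambda_0$ above both equalizing indices and then $\lambda_1\geq\lambda_0$ by one further application of (AE2), and concludes from the internal coherence $r^{\nu}=p_{\nu\lambda_0}\circ p_{\lambda_0\lambda_1}\circ r^{\lambda_1}$. Your remark that ``$\mathbf{r}(\lambda_a)^{\rho}$ is pinned down only through its composite with $p_{\lambda_a\rho}$'' is slightly off: once $\mathbf{r}(\lambda_a)$ is chosen, all its components are fixed and mutually compatible, and that compatibility is exactly what makes the two prescriptions for $u$ agree; so the well-definedness worry dissolves the same way.

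The one step you flag as ``the real obstacle''---that $u(\varphi)\circ p_\mu=b$ must hold for $u(\varphi)$ to be a morphism of the comma category at all---you do not resolve, and it should be said that the paper's written proof does not address it either: only $\eta'\circ u(\eta')=m_f$ is checked there. The required identity reads $f_{\lambda''}\circ p_{\lambda''\lambda'''}\circ r^{\lambda'''}\circ p_{\lambda'}=f_{\lambda''}\circ p_{\lambda''}$, and the only a priori information is that $r^{\lambda'''}\circ p_{\lambda'}$ and $p_{\lambda'''}$ become equal after \emph{post}-composition with $p_{\lambda\lambda'''}$, to which neither (AE1) nor (AE2) applies. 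So you have correctly isolated the single genuinely delicate point of the converse direction, but your appeal to ``sufficiently rigid index sets'' is not an argument, and your proof is incomplete at exactly the place where the paper's own write-up is silent. Everything else in your proposal matches the paper's proof step for step.
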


\begin{proof}
Suppose that $X_{\mathcal{P}}$ is a uniformly movable category. We
shall prove that $\mathbf{X}$ is a uniformly movable system.

If $\lambda \in \Lambda $, consider $p_{\lambda }:X\rightarrow X_{\lambda }$ as an object of $X_{\mathcal{P}}$ and there are an object $M(p_{\lambda})=f^{\prime }:X\rightarrow Q^{\prime }$ in $X_{\mathcal{P}}$ and a morphism $m_{p_{\lambda }}=\eta :Q^{\prime }\rightarrow X_{\lambda }$ in $X_{\mathcal{P}}$ satisfying conditions of the Definition \ref{umov} (see below Diagram 4).

By property (AE1) of a $\mathcal{P}$-expansion \cite{MS2} (Ch. I, \S\, 2.1, Th. 1), for $f^{\prime }$ there is a $\widetilde{\lambda }\in \Lambda$, $\widetilde{\lambda }\geq \lambda$ and an $\widetilde{f^{\prime }}:X_{\widetilde{\lambda }}\rightarrow Q^{\prime }$ such that
\begin{equation}\label{1}
f^{\prime }=\widetilde{f^{\prime }}\circ p_{\widetilde{\lambda }}.
\end{equation}

It is not difficult to verify that
\begin{equation}\label{2}
p_{\lambda \widetilde{\lambda }}\circ p_{\widetilde{\lambda }}=\eta \circ \widetilde{f^{\prime }}\circ p_{\widetilde{\lambda }}.
\end{equation}
Indeed:
$$\eta \circ \widetilde{f^{\prime }}\circ p_{\widetilde{\lambda }}=\eta \circ f^{\prime }=p_{\lambda }=p_{\lambda\widetilde{\lambda }}\circ p_{\widetilde{\lambda }}.$$

\begin{center}
\begin{picture}(90,105)
\put(0,30){$X_\lambda$} \put(0,60){$X_{\tilde{\lambda}}$}
\put(45,30){$X$} \put(45,60){$X_{\lambda '}$} \put(90,0){$X_{\lambda
''}$} \put(90,90){$Q'$} \put(87,6){\vector(-3,1){70}}
\put(54,30){\vector(3,-2){35}} \put(41,34){\vector(-1,0){25}}
\put(55,38){\vector(2,3){32}} \put(50,42){\vector(0,1){14}}
\put(5,56){\vector(0,-1){14}} \put(41,63){\vector(-1,0){25}}
\put(87,88){\vector(-3,-2){30}} \put(12,65){\vector(3,1){75}}
\put(94,84){\vector(0,-1){73}}

\put(-10,48){\scriptsize $p_{\lambda \tilde{\lambda}}$}
\put(38,45){\scriptsize $p_{\lambda '}$} \put(95,45){\scriptsize
$u(p_{\lambda \lambda ''})$} \put(20,57){\scriptsize
$p_{\tilde{\lambda} \lambda '}$}
\put(72,57){\scriptsize $f'$}
\put(22,37){\scriptsize $p_\lambda$} \put(43,10){\scriptsize
$p_{\lambda \lambda ''}$} \put(45,80){\scriptsize $\tilde{f'}$}
\put(60,76){\scriptsize $\eta $} \put(70,22){\scriptsize $p_{\lambda
''}$}
\end{picture} { } \\[15pt]

Diagram 4.
\end{center}

From the equality (\ref{2}) and the property (AE2) \cite{MS2} (Ch. I, \S\, 2.1, Th.~ 1), we deduce the existence of an index $\lambda ^{\prime }\in \Lambda$, $\lambda^{\prime }\geq \widetilde{\lambda }$, for which
\begin{equation}\label{3}
p_{\lambda \widetilde{\lambda }}\circ p_{\widetilde{\lambda }\lambda ^{\prime }}=\eta \circ \widetilde{f^{\prime }}\circ p_{\widetilde{\lambda }\lambda ^{\prime }}.
\end{equation}

Now we shall show that the obtained index $\lambda ^{\prime }\in \Lambda$ is a uniform movability index of $\lambda$, i.\,e., we must define a morphism $\mathbf{r}=(r^{\lambda''}):X_{\lambda'}\rightarrow \mathbf{X}$ in $pro-\mathcal{T}$, with $r^{\lambda''}:X_{\lambda^{\prime }}\rightarrow X_{\lambda ^{\prime \prime }}$, $\lambda'' \in \Lambda$, satisfying the condition
\begin{equation}\label{4}
\mathbf{p}_{\lambda }\circ
\mathbf{r}=p_{\lambda \lambda ^{\prime }}.
\end{equation}

Let $\lambda^{\prime \prime }\in \Lambda $ be arbitrary, with $\lambda ^{\prime \prime }\geq \lambda$. For the object $p_{\lambda^{\prime \prime }}:X\rightarrow X_{\lambda ^{\prime \prime }}$ and the morphism $p_{\lambda \lambda ^{\prime \prime }}:X_{\lambda ^{\prime \prime}}\rightarrow X_{\lambda }$ of the comma category $X_{\mathcal{P}}$, there exists a morphism $u(p_{\lambda \lambda ^{\prime \prime }}):Q^{\prime}\rightarrow X_{\lambda ^{\prime \prime }},$ which satisfies the equality
\begin{equation}\label{5}
\eta =p_{\lambda \lambda ^{\prime
\prime }}\circ u(p_{\lambda \lambda ^{\prime \prime }})
\end{equation}
(see Definition 2). Observe, that if $\lambda ^{\prime \prime }=\lambda$ we get

\begin{equation}\label{6}
u(p_{\lambda \lambda })=\eta.
\end{equation}

Now let us define
\begin{equation}\label{7}
r^{\lambda ^{\prime \prime}}=u(p_{\lambda \lambda^{\prime \prime }})\circ \widetilde{f^{\prime }}\circ p_{\widetilde{\lambda }\lambda ^{\prime }}:X_{\lambda ^{\prime }}\rightarrow X_{\lambda ^{\prime\prime }}.
\end{equation}
It is easy to see that
\begin{equation}\label{8}
r^{\lambda}=p_{\lambda \lambda^{\prime }}.
\end{equation}
Indeed, by (\ref{7}), (\ref{6}) and (\ref{3}), we have $r^{\lambda }=u(p_{\lambda \lambda})\circ \widetilde{f^{\prime }}\circ p_{\widetilde{\lambda }\lambda ^{\prime}}=\eta \circ \widetilde{f^{\prime }}\circ p_{\widetilde{\lambda }\lambda^{\prime }}=p_{\lambda \widetilde{\lambda }}\circ p_{\widetilde{\lambda }\lambda ^{\prime }}=p_{\lambda \lambda ^{\prime }}$.

By applying (\ref{3}), (\ref{5}) and (\ref{7}), we get
$$p_{\lambda \lambda ^{\prime }}=p_{\lambda \widetilde{\lambda }}\circ p_{\widetilde{\lambda }\lambda ^{\prime }}=\eta \circ \widetilde{f^{\prime }}\circ p_{\widetilde{\lambda }\lambda ^{\prime }}=p_{\lambda \lambda ^{\prime\prime }}\circ u(p_{\lambda \lambda ^{\prime \prime }})\circ \widetilde{f^{\prime }}\circ p_{\widetilde{\lambda }\lambda ^{\prime }}=p_{\lambda\lambda ^{\prime \prime }}\circ r^{\lambda ^{\prime \prime }}.$$

Thus, for any $\lambda ^{\prime \prime}\in \Lambda$, $\lambda ^{\prime \prime}\geq \lambda$, the following condition is satisfied:
\begin{equation}\label{9}
p_{\lambda \lambda ^{\prime }}=p_{\lambda \lambda^{\prime \prime }}\circ r^{\lambda^{\prime \prime}}.
\end{equation}

Now, for an arbitrary $\lambda^{\prime \prime }\in \Lambda$, with $\lambda ^{\prime \prime }<\lambda <\lambda ^{\prime }$ let us define $r^{\lambda ^{\prime \prime }}=p_{\lambda''\lambda'}$.

In order to show that $\mathbf{r}=(r^{\lambda ^{\prime \prime
}}):X_{\lambda^{\prime }}\rightarrow \mathbf{X}$ is a morphism in
$pro-\mathcal{T}$, which satisfies (\ref{4}), it is necessary to
prove that for any $\lambda ^{\prime\prime }<\lambda ^{\prime \prime
\prime }$ the following condition is satisfied:
\begin{equation}\label{10}
p_{\lambda ^{\prime \prime }\lambda^{\prime \prime \prime }}\circ r^{\lambda'''}=r^{\lambda''}.
\end{equation}

Let $\lambda \leq \lambda ^{\prime \prime }<\lambda^{\prime \prime
\prime }.$ In accordance with (\ref{7}), we have
\begin{equation}\label{11}
r^{\lambda ^{\prime \prime \prime}}=u(p_{\lambda \lambda ^{\prime \prime \prime }})\circ \widetilde{f^{\prime}}\circ p_{\widetilde{\lambda }\lambda ^{\prime }}:X_{\lambda ^{\prime}}\rightarrow X_{\lambda ^{\prime \prime \prime }}.
\end{equation}
Since $p_{\lambda \lambda ^{\prime \prime \prime }}=p_{\lambda \lambda
^{\prime \prime }}\circ p_{\lambda ^{\prime \prime }\lambda ^{\prime \prime\prime }}$, by condition 2 from Definition \ref{umov}, we have
\begin{equation}\label{12}
u(p_{\lambda \lambda ^{\prime\prime }})=p_{\lambda ^{\prime \prime }\lambda ^{\prime \prime \prime}}\circ u(p_{\lambda \lambda ^{\prime \prime \prime }}).
\end{equation}
By applying (\ref{11}), (\ref{12}), (\ref{7}), we get
$$p_{\lambda ^{\prime \prime }\lambda ^{\prime \prime \prime }}\circ r^{\lambda ^{\prime \prime \prime }}=p_{\lambda ^{\prime \prime }\lambda^{\prime \prime \prime }}\circ u(p_{\lambda \lambda ^{\prime \prime \prime}})\circ \widetilde{f^{\prime }}\circ p_{\widetilde{\lambda }\lambda^{\prime }}=u(p_{\lambda \lambda ^{\prime \prime }})\circ \widetilde{f^{\prime }}\circ p_{\widetilde{\lambda }\lambda ^{\prime }}=r^{\lambda^{\prime \prime }}.$$

So, $\mathbf{r}=(r^{\lambda ^{\prime \prime }}):X_{\lambda
^{\prime}}\rightarrow \mathbf{X}$ is a morphism in
$pro-\mathcal{T}$, which satisfies (\ref{4}) and thus $X$ is a
uniformly movable in the sense of shape theory.

Now we shall prove the converse.

Suppose $\mathbf{X}=(X_{\lambda },p_{\lambda \lambda ^{\prime }},\Lambda )$ is a uniformly movable inverse system. We have to verify that $X_{\mathcal{P%
}}$ is a uniformly movable category.

Consider an object $f:X\rightarrow Q$ of the comma category $X_{\mathcal{P}}$ (see Diagram 5). By condition (AE1) it follows that there exist an index $\lambda \in \Lambda $ and an $f_{\lambda }:X_{\lambda }\rightarrow Q$ in $\mathcal{P}$ such that
\begin{equation}\label{13}
f=f_{\lambda }\circ
p_{\lambda }.
\end{equation}

For the index $\lambda \in \Lambda $ let us consider a corresponding uniform movability index $\lambda ^{\prime }\in \Lambda$, $\lambda ^{\prime }\geq\lambda$. From (\ref{13}) we get
\begin{equation}\label{14}
f=f_{\lambda }\circ
p_{\lambda \lambda ^{\prime }}\circ p_{\lambda ^{\prime }}.
\end{equation}

\begin{center}
\begin{picture}(100,120)
\put(50,50){$X$}
\put(0,50){$Q$}
\put(100,50){$X_{\lambda '''}$}
\put(25,0){$Q ''$}
\put(25,100){$X_{\lambda }$}
\put(75,0){$X_{\lambda ''}$}
\put(75,100){$X_{\lambda '}$}

\put(43,53){\vector(-1,0){30}} \put(64,53){\vector(1,0){30}}
\put(50,62){\vector(-1,2){16}} \put(60,44){\vector(1,-2){16}}
\put(50,44){\vector(-1,-2){16}} \put(62,62){\vector(1,2){16}}
\put(24,93){\vector(-1,-2){16}} \put(105,44){\vector(-1,-2){16}}
\put(89,93){\vector(1,-2){16}} \put(24,9){\vector(-1,2){16}}
\put(70,103){\vector(-1,0){30}} \put(70,3){\vector(-1,0){30}}

\put(2,75){\scriptsize $f_\lambda $} \put(32,75){\scriptsize
$p_\lambda $} \put(72,75){\scriptsize $p_{\lambda '}$}
\put(103,75){\scriptsize $r^{\lambda '''}$}

\put(2,25){\scriptsize $\eta '$} \put(32,25){\scriptsize $f ''$}
\put(73,25){\scriptsize $p_{\lambda ''}$} \put(100,25){\scriptsize
$p_{\lambda '' \lambda '''}$}

\put(25,57){\scriptsize $f$} \put(75,57){\scriptsize $p_{\lambda
'''}$} \put(50,107){\scriptsize $p_{\lambda \lambda '}$}
\put(50,-6){\scriptsize $f_{\lambda''}$}
\end{picture} \\[15pt]

Diagram 5.
\end{center}

Now let us prove that the object
$$M(f):=p_{\lambda ^{\prime}}:X\rightarrow X_{\lambda ^{\prime }}$$
and the morphism
\begin{equation}\label{15}
m_{f}:=f_{\lambda }\circ
p_{\lambda \lambda ^{\prime }}:X_{\lambda ^{\prime }}\rightarrow Q
\end{equation}
satisfy the definition of the uniform movability of the comma category $X_{\mathcal{P}}$. Indeed, let $f^{\prime \prime }:X\rightarrow Q^{\prime \prime }$ be an arbitrary object and $\eta ^{\prime }:Q^{\prime \prime }\rightarrow Q$ an arbitrary morphism in $X_{\mathcal{P}},$ i.\,e.,
\begin{equation}\label{16}
f=\eta ^{\prime }\circ f^{\prime
\prime }.
\end{equation}

For the object $f^{\prime \prime }:X\rightarrow Q^{\prime }$ there exist an index $\lambda ^{\prime \prime }\in \Lambda$, $\lambda ^{\prime \prime }\geq\lambda,$ and an $f_{\lambda ^{\prime \prime }}:X_{\lambda ^{\prime \prime}}\rightarrow Q^{\prime \prime }$, such that
\begin{equation}\label{17}
f^{\prime \prime }=f_{\lambda^{\prime \prime }}\circ p_{\lambda ^{\prime \prime }}.
\end{equation}
It is clear that
$$f_{\lambda }\circ p_{\lambda \lambda^{\prime \prime }}\circ p_{\lambda ^{\prime \prime }}=\eta ^{\prime }\circ f_{\lambda ^{\prime \prime }}\circ p_{\lambda ^{\prime \prime }}.$$

Therefore, according to condition (AE2), we can find an index $\lambda^{\prime \prime \prime }\in \Lambda$, $\lambda ^{\prime \prime \prime }\geq\lambda ^{\prime \prime }$, such that
\begin{equation}\label{18}
f_{\lambda }\circ p_{\lambda\lambda ^{\prime \prime }}\circ p_{\lambda ^{\prime \prime }\lambda ^{\prime\prime \prime }}=\eta ^{\prime }\circ f_{\lambda ^{\prime \prime }}\circ p_{\lambda ^{\prime \prime }\lambda ^{\prime \prime \prime }}.
\end{equation}

By the uniform movability of the inverse system $\mathbf{X}=(X_{\lambda},p_{\lambda \lambda ^{\prime }},\Lambda ),$ there exist a morphism $\mathbf{r}=(r^{\lambda ^{\prime \prime \prime }}):X_{\lambda ^{\prime }}\rightarrow\mathbf{X}$ in $pro-\mathcal{T}$ such that $\mathbf{p}_{\lambda }\circ \mathbf{r}=p_{\lambda \lambda ^{\prime}}$, i.\,e., for $r^{\lambda'''}:X_{\lambda'}\rightarrow X_{\lambda'''}$,
\begin{equation}\label{19}
p_{\lambda \lambda ^{\prime}}=p_{\lambda \lambda ^{\prime \prime \prime }}\circ r^{\lambda ^{\prime\prime \prime}}.
\end{equation}

Let us define
\begin{equation}\label{20}
u(\eta ^{\prime})=f_{\lambda ^{\prime \prime }}\circ p_{\lambda ^{\prime \prime }\lambda^{\prime \prime \prime }}\circ r^{\lambda ^{\prime \prime \prime }}.
\end{equation}

By (\ref{20}), (\ref{18}), (\ref{19}) and (\ref{15}) we get
$$\eta ^{\prime }\circ u(\eta ^{\prime })=\eta ^{\prime }\circ f_{\lambda^{\prime \prime }}\circ p_{\lambda ^{\prime \prime }\lambda ^{\prime \prime\prime }}\circ r^{\lambda ^{\prime \prime \prime }}=f_{\lambda }\circ p_{\lambda \lambda ^{\prime \prime }}\circ p_{\lambda ^{\prime \prime}\lambda ^{\prime \prime \prime }}\circ r^{\lambda ^{\prime \prime \prime}}=f_{\lambda }\circ p_{\lambda \lambda ^{\prime }} = m_{f}.
$$

So, the morphism $u(\eta\,'):X_{\lambda'}\rightarrow Q$ satisfies the condition
\begin{equation}\label{21}
\eta\, ^{\prime }\circ u(\eta\,^{\prime })=m_{f}.
\end{equation}
Thus, the first condition of uniform movability of the comma category $X_{\mathcal{P}}$ is proved.

Now we shall prove the second condition of uniform movability of the comma category $X_{\mathcal{P}}$.

Let $\widetilde{f}'':X\rightarrow \widetilde{Q}''$ be an arbitrary object and $\widetilde{\eta}\,':\widetilde{Q}''\rightarrow Q$, $\varphi~:\widetilde{Q}''\rightarrow~ Q^{\prime \prime}$ be some morphisms of the comma category $X_{\mathcal{P}}$ such that
\begin{equation}\label{22}
\widetilde{\eta}\,'=\eta ^{\prime }\circ \varphi.
\end{equation}
Since $\varphi~:\widetilde{Q}''\rightarrow~ Q^{\prime \prime}$ is a morphism in $X_{\mathcal{P}}$,
\begin{equation}\label{1-1}
f''=\varphi \circ \widetilde{f}''.
\end{equation}
In analogy with the construction of the morphism $u(\eta\, ^{\prime })$ (see (\ref{20})) $u(\widetilde{\eta\, ^{\prime }})$ can be written as
\begin{equation}\label{23}
u(\widetilde{\eta}\,')=f_{\widetilde{\lambda}''}\circ p_{\widetilde{\lambda}''\widetilde{\lambda}'''}\circ r^{\widetilde{\lambda}'''}.
\end{equation}
We have also (see (\ref{17}))
\begin{equation}\label{1-2}
\widetilde{f}''=f_{\widetilde{\lambda}''}\circ p_{\widetilde{\lambda}''}.
\end{equation}
It remains to show that
\begin{equation}\label{24}
u(\eta\,')=\varphi \circ u(\widetilde{\eta}\,').
\end{equation}

Consider any index $\lambda _{0}\in \Lambda$, such that $\lambda
_{0}\geq \lambda ^{\prime \prime \prime }$ and $\lambda _{0}\geq
\widetilde{\lambda}\,'''$ (we have that $(\Lambda,\leq )$ is a
directed preordered set). Taking into account (\ref{1-1}) and (\ref{1-2}) it is not difficult to see that
$$f_{\lambda ''}\circ p_{\lambda ''\lambda'''}\circ p_{\lambda '''\lambda_{0}}\circ p_{\lambda _{0}}=\varphi \circ f_{\widetilde{\lambda} ''}\circ p_{\widetilde{\lambda} ''\widetilde{\lambda}'''}\circ p_{\widetilde{\lambda}'''\lambda _{0}}\circ p_{\lambda _{0}}.$$

Hence, by the property (AE2) of the $\mathcal{P}$-expansion $\mathbf{p}=(p_{\lambda }):X\rightarrow \mathbf{X}=(X_{\lambda },p_{\lambda \lambda^{\prime }},\Lambda ),$ we can find an index $\lambda _{1}\in \Lambda$, $\lambda _{1}\geq \lambda _{0},$ such that
\begin{equation}\label{25}
f_{\lambda ^{\prime \prime }}\circ p_{\lambda ^{\prime \prime }\lambda
^{\prime \prime \prime }}\circ p_{\lambda ^{\prime \prime \prime }\lambda_{0}}\circ p_{\lambda _{0}\lambda _{1}}=\varphi \circ f_{\widetilde{\lambda} ^{\prime \prime }}\circ p_{\widetilde{\lambda} ^{\prime \prime }\widetilde{\lambda} ^{\prime \prime \prime }}\circ p_{\widetilde{\lambda}^{\prime \prime \prime }\lambda _{0}}\circ p_{\lambda _{0}\lambda _{1}}.
\end{equation}
Since $\mathbf{r}=(r^{\lambda ^{\prime \prime \prime }}):X_{\lambda ^{\prime}}\rightarrow \mathbf{X}$ is a morphism of inverse systems, and $\lambda_{1}\geq \lambda ^{\prime \prime \prime }$, $\lambda _{1}\geq \widetilde{\lambda} ^{\prime \prime \prime }$, we have
\begin{equation}\label{26}
r^{\lambda ^{\prime \prime \prime}}=p_{\lambda ^{\prime \prime \prime }\lambda _{0}}\circ p_{\lambda
_{0}\lambda _{1}}\circ r^{\lambda _{1}}
\end{equation}
and
\begin{equation}\label{27}
r^{\widetilde{\lambda} ^{\prime \prime\prime }}=p_{\widetilde{\lambda} ^{\prime \prime \prime }\lambda _{0}}\circ
p_{\lambda _{0}\lambda _{1}}\circ r^{\lambda _{1}}.
\end{equation}

Now we are ready to verify (\ref{24}). By (\ref{20}), (\ref{23}), (\ref{25}), (\ref{26}) and (\ref{27}), we get
\begin{multline*}
\varphi \circ u(\widetilde{\eta}\, ^{\prime })=\varphi \circ f_{\widetilde{\lambda} ^{\prime \prime }}\circ p_{\widetilde{\lambda}\, ^{\prime \prime }\widetilde{\lambda} ^{\prime \prime \prime }}\circ r^{\widetilde{\lambda}\,^{\prime \prime \prime }}=\varphi \circ f_{\widetilde{\lambda} ^{\prime\prime }}\circ p_{\widetilde{\lambda} ^{\prime \prime }\widetilde{\lambda}^{\prime \prime \prime }}\circ p_{\widetilde{\lambda} ^{\prime \prime \prime}\lambda _{0}}\circ p_{\lambda _{0}\lambda _{1}}\circ r^{\lambda _{1}}= \\
f_{\lambda ^{\prime \prime }}\circ p_{\lambda ^{\prime \prime }\lambda^{\prime \prime \prime }}\circ p_{\lambda ^{\prime \prime \prime }\lambda_{0}}\circ p_{\lambda _{0}\lambda _{1}}\circ r^{\lambda _{1}}=f_{\lambda^{\prime \prime }}\circ p_{\lambda ^{\prime \prime }\lambda ^{\prime \prime\prime }}\circ r^{\lambda ^{\prime \prime \prime }}=u(\eta\, ^{\prime }).
\end{multline*}

This completes the proof of the theorem.
\end{proof}

Now by using some theorems of dense subcategory (see \cite{MS2}, Th. 2, Ch.
I, \S\, 4.1, Th. 6 and Th. 7, Ch. I, \S\, 4.3), we can formulate the
following corollaries.

\begin{corollary}\label{cor2}
Let $\mathcal{T}$ be a category and $\mathcal{P}$ a dense
subcategory of \ $\mathcal{T}$. An object $X\in \mathcal{T}$ is
uniformly movable in the sense of shape theory if and only if the
comma category $X_{\mathcal{P}}$ of $X$ in $\mathcal{T}$ over
$\mathcal{P}$ is uniformly movable category.
\end{corollary}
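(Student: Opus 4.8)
The plan is to obtain Corollary~\ref{cor2} as a formal consequence of Theorem~\ref{th1}, combined only with the standard properties of dense subcategories and $\mathcal{P}$-expansions recalled above. Since $\mathcal{P}$ is a dense subcategory of $\mathcal{T}$, the object $X$ admits a $\mathcal{P}$-expansion $\mathbf{p}=(p_{\lambda}):X\rightarrow \mathbf{X}=(X_{\lambda},p_{\lambda\lambda'},\Lambda)$; I would fix one such expansion once and for all. I would also note at the outset that the comma category $X_{\mathcal{P}}$ is constructed purely from $X$ and $\mathcal{P}$, and therefore does not depend on this choice.

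The first step is to rewrite ``$X$ is uniformly movable in the sense of shape theory'' in terms of the fixed expansion. By the definition recalled just before the theorem, this assertion means that $X$ possesses \emph{some} uniformly movable $\mathcal{P}$-expansion. Because any two $\mathcal{P}$-expansions of $X$ are linked by the natural isomorphism in $pro-\mathcal{P}$, and because uniform movability of an inverse system is invariant under isomorphisms in $pro-\mathcal{P}$ (the invariance results of \cite{MS2} recalled above), this is equivalent to the statement that the particular system $\mathbf{X}$ fixed above is a uniformly movable inverse system. Hence ``$X$ uniformly movable in $Sh_{(\mathcal{T},\mathcal{P})}$'' $\Longleftrightarrow$ ``$\mathbf{X}$ is a uniformly movable inverse system''.

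The second step is to apply Theorem~\ref{th1} to $X$ together with the fixed $\mathcal{P}$-expansion $\mathbf{p}:X\rightarrow\mathbf{X}$: it yields ``$\mathbf{X}$ is a uniformly movable inverse system'' $\Longleftrightarrow$ ``$X_{\mathcal{P}}$ is a uniformly movable category''. Composing these two equivalences gives precisely the statement of the corollary.

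I expect no real obstacle here: the entire substance of the corollary already lies in Theorem~\ref{th1}, and the only point demanding attention is the first step, namely the well-definedness of the shape-theoretic notion of uniform movability — the fact that it does not depend on the chosen $\mathcal{P}$-expansion. This in turn is guaranteed by the cited invariance of (uniform) movability under isomorphisms of inverse systems together with the essential uniqueness of $\mathcal{P}$-expansions for a dense subcategory, so it amounts to bookkeeping rather than to a genuine difficulty.
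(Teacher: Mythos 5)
Your proposal is correct and follows exactly the route the paper intends: density gives a $\mathcal{P}$-expansion, the invariance of uniform movability under isomorphisms of inverse systems (together with the essential uniqueness of $\mathcal{P}$-expansions) reduces the shape-theoretic notion to the fixed expansion, and Theorem~\ref{th1} finishes the argument. The paper leaves this as an immediate consequence, citing the same facts about dense subcategories that you invoke, so there is nothing to add.
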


\begin{corollary}
If $\mathcal{P}$ is a dense subcategory of a category $\mathcal{T}$
then any object $P\in \mathcal{P}$ is uniformly movable.
\end{corollary}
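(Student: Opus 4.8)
The plan is to derive this directly from the two results immediately preceding it, namely Corollary \ref{cor2} and Proposition \ref{pr3}. Since $\mathcal{P}$ is dense in $\mathcal{T}$, Corollary \ref{cor2} applies to every $P\in\mathcal{P}$, so it suffices to show that the comma category $P_{\mathcal{P}}$ is a uniformly movable category. By Proposition \ref{pr3} this will follow once I exhibit an initial object of $P_{\mathcal{P}}$.

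The natural candidate is the identity $1_{P}\colon P\to P$, which is an object of $P_{\mathcal{P}}$ precisely because $P\in\mathcal{P}$. I would then check that it is initial: given any object $p\colon P\to Q$ of $P_{\mathcal{P}}$ (so $Q\in\mathcal{P}$), a morphism $(P\stackrel{1_{P}}{\to}P)\to(P\stackrel{p}{\to}Q)$ in $P_{\mathcal{P}}$ is, by the definition of the comma category, a morphism $u\colon P\to Q$ of $\mathcal{P}$ with $u\circ 1_{P}=p$; hence necessarily $u=p$, and conversely $u:=p$ does make the required triangle commute, so there is exactly one such morphism. Thus $1_{P}$ is an initial object of $P_{\mathcal{P}}$, the category $P_{\mathcal{P}}$ is uniformly movable by Proposition \ref{pr3}, and therefore $P$ is uniformly movable by Corollary \ref{cor2}.

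Since this is a corollary there is essentially no obstacle; the only point worth noting is that the morphism $u=p$ in the previous step must be taken in $\mathcal{P}$, not merely in $\mathcal{T}$, which is automatic when $\mathcal{P}$ is a full subcategory of $\mathcal{T}$ (as for $\textbf{HPol}\subseteq\textbf{HTop}$), i.e.\ in the situation relevant to shape theory. If one prefers to bypass the comma category entirely, there is an alternative route: for $P\in\mathcal{P}$ the rudimentary inverse system $(P)$ together with $1_{P}\colon P\to P$ is a $\mathcal{P}$-expansion of $P$, and a rudimentary inverse system is trivially uniformly movable (for the single index $\lambda$ one takes the uniform movability index $m(\lambda)=\lambda$ and the morphism $\mathbf{r}(\lambda)=1_{P}$, whence $\mathbf{p}_{\lambda}\circ\mathbf{r}(\lambda)=p_{\lambda,m(\lambda)}$ holds trivially). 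Hence $P$ admits a uniformly movable $\mathcal{P}$-expansion and is uniformly movable in the sense of shape theory; this second route uses only the definitions recalled before Theorem \ref{th1}.
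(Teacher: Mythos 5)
Your proof is correct and follows essentially the same route as the paper: exhibit $1_P$ as an initial object of $P_{\mathcal{P}}$, invoke Proposition \ref{pr3} to get uniform movability of the comma category, and conclude via Corollary \ref{cor2}. Your added observation that the unique morphism $u=p$ must lie in $\mathcal{P}$ (automatic when $\mathcal{P}$ is full, as for $\mathbf{HPol}\subseteq\mathbf{HTop}$) is a fair point that the paper's one-line proof leaves implicit.
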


\begin{proof}
The comma category $P_\mathcal{P}$ has as initial object the
identity morphism $1_P:P\to P$. By Proposition \ref{pr3}, this
implies that $P_\mathcal{P}$ is uniformly movable category and we
can apply Corollary \ref{cor2}.
\end{proof}

\begin{corollary}
An arbitrary topogical space $X$ is uniformly movable if and only if
its comma category $X_{\mathbf{HPol}}$ in the category \textbf{HTop
}over the subcategory \textbf{HPol} is uniformly movable.

In particular polyhedra and ANR's are uniformly movable spaces.
\end{corollary}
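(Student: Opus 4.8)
The plan is to obtain the statement as the special case $\mathcal{T}=\mathbf{HTop}$, $\mathcal{P}=\mathbf{HPol}$ of Corollary~\ref{cor2}. As recalled just before Theorem~\ref{th1}, $\mathbf{HPol}$ is a dense subcategory of $\mathbf{HTop}$, so Corollary~\ref{cor2} applies and says that $X\in\mathbf{HTop}$ is uniformly movable in the sense of shape theory exactly when the comma category $X_{\mathbf{HPol}}$ is a uniformly movable category. Since, by the definition adopted in the excerpt, a topological space is called \emph{uniformly movable} precisely when it is $\mathbf{HPol}$-uniformly movable, this is literally the first assertion of the corollary, and nothing further is required.

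For the \emph{in particular} part I would argue in two steps. Every polyhedron $P$ is an object of the dense subcategory $\mathbf{HPol}$, hence is a uniformly movable space by the corollary immediately preceding this one (whose proof uses that $1_P\colon P\to P$ is an initial object of $P_{\mathbf{HPol}}$, together with Proposition~\ref{pr3} and Corollary~\ref{cor2}). For an ANR $X$ I would invoke the classical fact that $X$ has the homotopy type of a polyhedron; thus $X$ is isomorphic in $\mathbf{HTop}$, and so isomorphic in the shape category $Sh_{(\mathbf{HTop},\mathbf{HPol})}$, to a polyhedron $P$, hence dominated by $P$ there. Since $P$ is uniformly movable, the proposition stating that an object dominated by a uniformly movable object is itself uniformly movable (applied in $Sh_{(\mathbf{HTop},\mathbf{HPol})}$) shows that $X$ is uniformly movable. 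Equivalently, one may transport a uniformly movable $\mathbf{HPol}$-expansion of $P$ across the isomorphism, using that uniform movability of inverse systems is invariant under isomorphisms in $pro-\mathbf{HPol}$.

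As the whole corollary is a specialization of results already established, I do not expect a genuine obstacle. The only points needing care are bookkeeping ones: checking that ``uniformly movable topological space'' in the statement is exactly the hypothesis and conclusion of Corollary~\ref{cor2} under the identifications $\mathcal{T}=\mathbf{HTop}$, $\mathcal{P}=\mathbf{HPol}$, and, in the ANR case, making sure the homotopy-type fact is applied at the level of $\mathbf{HTop}$ so that it yields a genuine shape isomorphism (hence a domination) with a polyhedron.
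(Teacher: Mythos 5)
Your proof is correct and follows the route the paper intends: the main assertion is exactly the specialization of Corollary~\ref{cor2} to $\mathcal{T}=\mathbf{HTop}$, $\mathcal{P}=\mathbf{HPol}$ (using that $\mathbf{HPol}$ is dense in $\mathbf{HTop}$ and that ``uniformly movable space'' means $\mathbf{HPol}$-uniformly movable), and the ``in particular'' clause follows from the preceding corollary for polyhedra together with the classical homotopy-type fact for ANR's. The one slight imprecision is your first ANR argument, which applies the domination proposition inside the shape category even though ``uniformly movable space'' is defined via expansions rather than as a uniformly movable object of $Sh_{(\mathbf{HTop},\mathbf{HPol})}$ in the sense of Definition~\ref{umov}; your alternative — transporting a uniformly movable $\mathbf{HPol}$-expansion of the polyhedron across the $\mathbf{HTop}$-isomorphism, using invariance of uniform movability under isomorphisms in $pro$-$\mathbf{HPol}$ — is the clean way to finish.
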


\begin{corollary}
An arbitrary pair of topological spaces $(X,X_{0})$ is uniformly
movable if and only if its comma category
$(X,X_{0})_{\mathbf{HPol}^{2}}$ in the homotopy category of pairs
\textbf{HTop}$^{2}$ over the homotopy subcategory of polyhedral
pairs \textbf{HPol}$^{2}$ is uniformly movable.

Particularly, a pointed space $(X,*)$ is uniformly movable if and
only if its comma category $(X,*)_{\mathbf{HPol}_{*}}$ in the
pointed homotopy category \textbf{HTop}$_{*}$ over the pointed
homotopy subcategory of polyhedra \textbf{HPol}$_{*}$ is uniformly
movable.

All pointed polyhedra and pointed ANR's are uniformly movable.
\end{corollary}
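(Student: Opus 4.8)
Throughout, write $(X,X_{0})_{\mathbf{HPol}^{2}}$ and $(X,*)_{\mathbf{HPol}_{*}}$ for the comma categories in question. The plan is to deduce all four assertions from Corollary \ref{cor2} together with two classical facts recorded in \cite{MS2}: that $\mathbf{HPol}^{2}$ is a dense subcategory of $\mathbf{HTop}^{2}$, and that $\mathbf{HPol}_{*}$ is a dense subcategory of $\mathbf{HTop}_{*}$; equivalently, every pair of topological spaces admits an $\mathbf{HPol}^{2}$-expansion and every pointed space admits an $\mathbf{HPol}_{*}$-expansion, so that the corresponding shape categories and the notion of ``(uniformly) movable in the sense of shape theory'' are defined in these settings. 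Granting this, the two ``if and only if'' statements are immediate instances of Corollary \ref{cor2}: apply it with $\mathcal{T}=\mathbf{HTop}^{2}$, $\mathcal{P}=\mathbf{HPol}^{2}$ and the object $X=(X,X_{0})$ to get the first equivalence, and with $\mathcal{T}=\mathbf{HTop}_{*}$, $\mathcal{P}=\mathbf{HPol}_{*}$ and the object $X=(X,*)$ to get the pointed one. The assertion that every pointed polyhedron $(P,*)$ is uniformly movable is likewise the instance $\mathcal{T}=\mathbf{HTop}_{*}$, $\mathcal{P}=\mathbf{HPol}_{*}$ of the preceding corollary (an object of a dense subcategory is uniformly movable); alternatively, the comma category $(P,*)_{\mathbf{HPol}_{*}}$ has the identity $1_{(P,*)}$ as initial object, hence is uniformly movable by Proposition \ref{pr3}, and then $(P,*)$ is uniformly movable by Corollary \ref{cor2}.

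For the remaining claim about pointed ANR's I would argue by domination. It is classical that a pointed ANR $(X,*)$ is dominated in $\mathbf{HTop}_{*}$ by a pointed polyhedron $(P,*)$: there are pointed homotopy classes $g\colon X\to P$ and $f\colon P\to X$ with $f\circ g=1_{X}$ in $\mathbf{HTop}_{*}$. I claim this makes the comma category $(X,*)_{\mathbf{HPol}_{*}}$ functorially dominated by $(P,*)_{\mathbf{HPol}_{*}}$. Define $J\colon (X,*)_{\mathbf{HPol}_{*}}\to (P,*)_{\mathbf{HPol}_{*}}$ on objects by $J\bigl(X\stackrel{p}{\to}Q\bigr)=\bigl(P\stackrel{p\circ f}{\longrightarrow}Q\bigr)$ and $D\colon (P,*)_{\mathbf{HPol}_{*}}\to (X,*)_{\mathbf{HPol}_{*}}$ on objects by $D\bigl(P\stackrel{q}{\to}Q\bigr)=\bigl(X\stackrel{q\circ g}{\longrightarrow}Q\bigr)$, both acting as the identity on the underlying morphism of polyhedra. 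A morphism of the comma category $(X,*)_{\mathbf{HPol}_{*}}$ is a map $u\colon Q\to Q'$ of polyhedra with $u\circ p=p'$, and then $u\circ(p\circ f)=p'\circ f$, so $J$ is a well-defined functor; similarly for $D$. Since $f\circ g=1_{X}$ we get $D\circ J=1_{(X,*)_{\mathbf{HPol}_{*}}}$. Now $(P,*)_{\mathbf{HPol}_{*}}$ is uniformly movable (it has an initial object, Proposition \ref{pr3}), so by Corollary \ref{Cor1} the category $(X,*)_{\mathbf{HPol}_{*}}$ is uniformly movable, and hence, by Corollary \ref{cor2}, $(X,*)$ is uniformly movable.

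I do not expect any real obstacle here: the whole argument is a translation of Theorem \ref{th1}/Corollary \ref{cor2} into the two concrete dense pairs $(\mathbf{HTop}^{2},\mathbf{HPol}^{2})$ and $(\mathbf{HTop}_{*},\mathbf{HPol}_{*})$. The only non-formal ingredients are external: the density of $\mathbf{HPol}^{2}$ in $\mathbf{HTop}^{2}$ and of $\mathbf{HPol}_{*}$ in $\mathbf{HTop}_{*}$, and the fact that a pointed ANR is dominated in $\mathbf{HTop}_{*}$ by a pointed polyhedron (one uses that an ANR is dominated by a polyhedron and that its basepoint is nondegenerate to upgrade the domination to the pointed setting), all of which are standard. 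Everything else is routine bookkeeping — the single point to verify is that post-composition with $f$ (resp.\ $g$) carries the structure triangles over $X$ to structure triangles over $P$ (resp.\ conversely) and leaves the polyhedral morphisms unchanged, which is exactly what is checked above when defining $J$ and $D$.
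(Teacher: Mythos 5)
Your proposal is correct and follows essentially the same route the paper intends: the two equivalences and the polyhedron case are immediate instances of Corollary \ref{cor2} once one imports from \cite{MS2} the density of $\mathbf{HPol}^{2}$ in $\mathbf{HTop}^{2}$ and of $\mathbf{HPol}_{*}$ in $\mathbf{HTop}_{*}$, which is exactly the citation the paper gives in lieu of a proof. The only place you go beyond the paper is the pointed ANR case, where your explicit functorial domination $D\circ J=1_{(X,*)_{\mathbf{HPol}_{*}}}$ between comma categories, combined with Proposition \ref{pr3} and Corollary \ref{Cor1}, is a correct and self-contained way to supply the argument the paper leaves implicit (resting, as you note, on the standard external fact that a pointed ANR is dominated by a pointed polyhedron in $\mathbf{HTop}_{*}$).
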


\begin{remark}
In \cite{G1}  a similar theorem with Theorem \ref{th1} was stated for the
movable space. Precisely it was proved that a topological space $X$
is a movable space if
and only if its comma category $X_{\mathbf{HPol}}$ in the category \textbf{%
HTop }over the subcategory \textbf{HPol} is movable. Now we can use
the fact that there are movable objects which are not uniformly
movable (see \cite{MS2}, p. 255) to conclude that there are movable
categories which are not uniformly movable.
\end{remark}

By the main Theorem \ref{th1} and Theorem 4 from [4, p. 173] we have
the following particular case.

\begin{corollary}
Let $\mathcal{T}$ be a category, $\mathcal{P}$ a subcategory of \ $\mathcal{T}$ and $X\in \mathcal{T}$.
Suppose that $X$ has as a $\mathcal{P}$-expansion an inverse sequence $\mathbf{p}:X\rightarrow\mathbf{X}=(X_{n},p_{nn+1})$.
Then the comma category $X_\mathcal{P}$ of $X$  in $\mathcal{T}$ over $\mathcal{P}$  is uniformly movable if and only if it is movable.
\end{corollary}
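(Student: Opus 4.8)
The plan is to route the equivalence through the associated inverse sequence $\mathbf{X}$ and to exploit the fact that, for inverse sequences, movability and uniform movability coincide. One implication is immediate and uses no hypothesis on the expansion: every uniformly movable category is movable, since condition~1 of Definition~\ref{umov} is literally the defining condition of a movable object in Definition~1 (this is the observation made right after Definition~\ref{umov}). Hence the entire content of the corollary is the implication: if $X_{\mathcal{P}}$ is a movable category, then it is uniformly movable.

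To establish it, assume $X_{\mathcal{P}}$ is movable. First I would invoke the movability counterpart of Theorem~\ref{th1}, namely that $X_{\mathcal{P}}$ is a movable category if and only if the $\mathcal{P}$-expansion $\mathbf{X}$ is a movable inverse system. This is the general categorical form of the theorem of \cite{G1} recalled in the Remark after Theorem~\ref{th1}; it is obtained by running the argument of the forward direction of Theorem~\ref{th1} with every appeal to condition~2 of Definition~\ref{umov}, and hence to the compatibility~(\ref{10}) and~(\ref{12}), simply deleted, because in the non-uniform case one needs, for each index $\lambda''\ge\lambda$, only a single factoring morphism $r^{\lambda''}$ with $p_{\lambda\lambda''}\circ r^{\lambda''}=p_{\lambda\lambda'}$, not a full morphism of $pro-\mathcal{T}$. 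Thus $\mathbf{X}=(X_{n},p_{nn+1})$ is a \emph{movable inverse sequence}. Now the decisive external input enters: by Theorem~4 of \cite{MS2} (p.~173), a movable inverse sequence is automatically uniformly movable. Applying it, $\mathbf{X}$ is a uniformly movable inverse system, and the converse part of Theorem~\ref{th1} then yields that the comma category $X_{\mathcal{P}}$ is uniformly movable.

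The only non-formal ingredient is the cited Theorem~4 of \cite{MS2}, and this is precisely the step that uses the hypothesis ``inverse \emph{sequence}'' rather than ``arbitrary inverse system'': for a sequence one may pass to a cofinal subsequence and splice the individual movability factors into a single morphism $\mathbf{r}(\lambda):X_{m(\lambda)}\rightarrow\mathbf{X}$ of $pro-\mathcal{T}$, whereas over a general directed index set no such diagonal construction is available. Everything else is bookkeeping already carried out in Theorem~\ref{th1} and in \cite{G1}, so I expect no further obstacle; in particular, no extra hypothesis on $\mathcal{T}$ or $\mathcal{P}$ (such as density) is needed.
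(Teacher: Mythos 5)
Your proof is correct and follows essentially the same route as the paper, which derives the corollary from Theorem \ref{th1} together with the theorem of Marde\v{s}i\'{c}--Segal that a movable inverse sequence is automatically uniformly movable. You additionally make explicit the needed movability analogue of Theorem \ref{th1} (the result of \cite{G1} recalled in the Remark), which the paper leaves implicit; this is a faithful filling-in rather than a different argument.
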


\begin{remark}
As we have specified in Introduction, if we take into consideration the more
restrictive definition for the uniform movability of a category given in
\cite{P}, in order to prove the uniform movability of the comma category $X_{%
\mathcal{P}}$ of an object $X,$ two supplementary conditions were
added to the uniform movability of a $\mathcal{P}$-expansion
$\mathbf{X=}(X_{\lambda },p_{\lambda \lambda ^{\prime }},\Lambda )$
of $X$. These conditions are the following.

(G1) If $m(\lambda )$ is the uniform movability index of $\lambda,$
then $p_{\lambda,m(\lambda )}:X_{m(\lambda )}\rightarrow X_{\lambda
}$ is a $\mathcal{P}$-monomorphism, that is if $p_{\lambda
,m(\lambda )}\circ u=p_{\lambda,m(\lambda )}\circ v,$ where
$u,v:P\rightarrow X_{m(\lambda )}$ are two morphisms in the
subcategory $\mathcal{P},$ then $u=v.$

(G2) If $\lambda,\lambda ^{\prime }\in \Lambda,$ then there exists a $%
\lambda ^{*}\in \Lambda,$ with $\lambda ^{*}\geq m(\lambda
),m(\lambda ^{\prime }),$ such that the following diagram commutes.

\begin{center}
\begin{picture}(100,120)
\put(50,100){$X_{m(\lambda )}$} \put(0,50){$X_{\lambda ^*}$}
\put(100,50){$\mathbf{X}$} \put(50,0){$X_{m(\lambda')}$}

\put(17,62){\vector(1,1){30}} \put(67,14){\vector(1,1){30}}
\put(17,45){\vector(1,-1){30}} \put(67,93){\vector(1,-1){30}}

\put(0,77){\scriptsize $p_{m(\lambda ),\lambda^*}$}
\put(87,77){\scriptsize $\mathbf{r}_{\lambda}$}
\put(0,24){\scriptsize $p_{m(\lambda'),\lambda^*}$}
\put(87,24){\scriptsize $\mathbf{r}_{\lambda'}$}
\end{picture} \\[15pt]
\end{center}

A space admitting such a $\mathcal{P}$-expansion was called $\mathcal{P}$-\textsl{global uniformly movable}. An example is the Warsaw circle \cite[Ex. 9]{P}.

These specifications motivate the timeliness, given by the first author, of the new definition of the uniform movability of a category which we apply in this paper.\textbf{\ }
\end{remark}

\end{document}